\documentclass[11pt]{article}

\usepackage{amsmath,amsthm,amssymb,enumitem}
\usepackage{tikz}
\usetikzlibrary{arrows.meta,positioning}
\usepackage[a4paper,margin=1in]{geometry}
\usepackage[colorlinks=true,linkcolor=blue,citecolor=blue,urlcolor=blue]{hyperref}

\newtheorem{theorem}{Theorem}[section]
\newtheorem{proposition}[theorem]{Proposition}
\newtheorem{corollary}[theorem]{Corollary}
\newtheorem{problem}[theorem]{Problem}
\newtheorem{conjecture}[theorem]{Conjecture}
\newtheorem{definition}[theorem]{Definition}
\newtheorem{remark}[theorem]{Remark}
\newtheorem{example}[theorem]{Example}
\newtheorem{lemma}[theorem]{Lemma}

\newcommand{\R}{\mathbb R}
\newcommand{\A}{\mathcal A}
\newcommand{\conf}{\operatorname{conf}}
\newcommand{\Hol}{\operatorname{Hol}}
\newcommand{\loc}{\mathrm{loc}}
\newcommand{\cyc}{\mathrm{cyc}}
\newcommand{\Ftwo}{\mathbb F_2}
\newcommand{\rot}{\operatorname{rot}}
\newcommand{\parnum}{\operatorname{pt}}

\title{Combinatorics of Inflection Points of Plane Curve Shadows}
\author{Boris Shapiro\thanks{Department of Mathematics, Stockholm University,
SE--106 91 Stockholm, Sweden. Email: \texttt{shapiro@math.su.se}.}}

\date{}

\begin{document}

\maketitle

\begin{abstract}
We study the minimum number of inflection points among generic immersed closed plane curves with a fixed embedded shadow.  The word immersed is essential: a genuinely embedded Jordan curve has inflection minimum zero.  For tree-like shadows, inflection criterion converts inflection-free realizability into a finite coorientation problem on the building polygons of the shadow, see \cite{ShapiroTree}.  We sharpen this viewpoint into an exact finite formula for the minimum number of normalized inflections and record a dynamic-programming computation on the block tree.  We then push the method beyond the tree-like case.  For every embedded shadow the same coorientation model gives a universal lower bound.  For a natural larger class, called tree--necklace shadows, in which the non-tree-like blocks are separated annular cycles, the lower bound is exact after imposing an explicit $\mathbb Z_2$ holonomy condition around each necklace.  We also record the algorithmic status of the exact minimization problem and formulate a likely NP-hardness problem for unrestricted shadows.  Finally, we introduce a related invariant: the minimum possible least multiplicity of the Gauss map, equivalently the smallest guaranteed number of oriented parallel tangencies.  This ``parallel-tangent load'' is controlled by the same inflection folds but is not determined by their number alone.
\end{abstract}

\makeatletter
\insert\footins{%
  \reset@font\footnotesize
  \interlinepenalty\interfootnotelinepenalty
  \splittopskip\footnotesep
  \splitmaxdepth\dp\strutbox
  \floatingpenalty\@MM
  \hsize\columnwidth
  \@parboxrestore
 \smallskip
  \noindent\textbf{Keywords.} Plane curves, inflection points, generic immersions, tree-like curves, planar shadows, coorientations, Gauss map, parallel tangents, computational complexity, Arnold invariants.\par
  \noindent\textbf{2020 Mathematics Subject Classification.} Primary 57R42; Secondary 53A04, 05C10, 57K10, 68Q25.
}
\makeatother

\section{Introduction}

Let
\[
 c:S^1\to \R^2
\]
be a smooth immersed closed plane curve.  We call $c$ \emph{generic} if its only singularities are transverse double points.  An \emph{inflection point} of $c$ is a point at which the signed curvature vanishes, or equivalently, where the convex side of the curve changes.

For a fixed isotopy class $[c]$ of generic immersed curves, set
\[
 \mu([c])=\min_{c'\in[c]}\#\{\text{inflection points of }c'\}.
\]
The general problem of estimating or computing $\mu([c])$ is classical and is naturally related to Whitney's regular homotopy theory and Arnold's invariants of plane curves and caustics \cite{Whitney,ArnoldCaustics,ArnoldPlaneCurves}.  Tree-like curves were introduced and studied from a combinatorial viewpoint by Aicardi \cite{Aicardi}; the corresponding inflection problem was treated by the author in \cite{ShapiroTree}.  A different extension, called almost tree-like curves, was later considered in connection with Arnold invariants and alternating knots \cite{Dagit}.  The present note is narrower but more quantitative: it asks for the exact minimum number of inflections in classes slightly larger than the tree-like class.  We also keep in mind the surrounding literature on nondegenerate and locally convex curves, for example \cite{Little,Saldanha}. 
For embedded plane curves  the problem is trivial, see Figure~\ref{fig:oval-shadow}.

\begin{figure}[t]
\centering
\begin{tikzpicture}[scale=.82,>=Stealth]
\begin{scope}
\node at (0,1.65) {embedded oval};
\draw[very thick] (0,0) ellipse (1.55 and .9);
\draw[-{Stealth[length=2mm]},gray] (1.55,0) arc (0:38:1.55 and .9);
\node at (0,-1.35) {$\mu=0$};
\end{scope}
\begin{scope}[xshift=5.2cm]
\node at (0,1.65) {immersed shadow};
\draw[very thick] (-1.25,0) .. controls (-1.25,1.05) and (-.15,1.05) .. (0,0);
\draw[very thick] (0,0) .. controls (.15,-1.05) and (1.25,-1.05) .. (1.25,0);
\draw[very thick] (1.25,0) .. controls (1.25,1.05) and (.15,1.05) .. (0,0);
\draw[very thick] (0,0) .. controls (-.15,-1.05) and (-1.25,-1.05) .. (-1.25,0);
\fill (0,0) circle (1.7pt);
\node at (0,-1.35) {one transverse double point};
\end{scope}
\end{tikzpicture}
\caption{The problem is trivial for embedded Jordan curves, but nontrivial for embedded shadows of immersed curves.}
\label{fig:oval-shadow}
\end{figure}

Thus the nontrivial problem concerns \emph{embedded shadows}: connected embedded planar $4$-regular graphs, together with the cyclic order of traversal induced by a generic immersion of $S^1$.  We shall use the language of curves and shadows interchangeably, but the reader should keep in mind that the curve is immersed while its shadow is embedded as a graph.

The main point of this note is the following.  For a tree-like shadow, the obstruction to removing all inflections is completely encoded by a finite coorientation problem on the pieces of the associated planar $4$-graph.  Moreover, after allowing normalized inflections at double points, the minimal number of inflections becomes an explicit finite minimization problem.  Beyond the tree-like case, cycles in the block-adjacency graph create global compatibility conditions.  We show that these conditions can be separated from the local inflection conditions and, for an annular extension of the tree-like class, encoded by a simple holonomy parity.

The paper is organized as follows.  Sections~2--6 repackage inflection tree-like theorem of \cite{ShapiroTree} as a minimization problem.  Sections~7--10 introduce the local lower bound, the annular holonomy obstruction and the exact tree--necklace formula.  Section~11 discusses the complexity of exact minimization.  Section~12 introduces the Gauss-map multiplicity, or parallel-tangent, invariant.  The final section records the remaining obstruction for general shadows.

\section{Generic curves and tree-like shadows}

Let $c$ be a connected generic immersed plane curve.  Its double points are vertices and the components of the complement of the vertices in $c$ are edges.  Thus $c$ determines an embedded planar $4$-regular graph, also called the \emph{shadow} of $c$.

\begin{definition}
A generic curve $c$ is called \emph{tree-like} if removing any double point disconnects the curve.  Equivalently, every vertex of the associated $4$-graph is a cut vertex.
\end{definition}

The terminology is justified by the following elementary observation.

\begin{lemma}
For a tree-like curve, the adjacency graph of the elementary pieces obtained by cutting the curve at all double points is a tree.
\end{lemma}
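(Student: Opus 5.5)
We must first fix what "elementary pieces" and their "adjacency graph" mean. Cutting the curve at all double points does not just produce the edges of the $4$-graph. At each double point $v$ of a tree-like curve, removing $v$ disconnects the curve, so the parameter circle $S^1$ splits at the two preimages of $v$ into two arcs. Each arc closes up at $v$ to a loop. Doing this at every double point decomposes $c$ into closed pieces. Each piece is a simple closed curve in the plane, namely the boundary of one of the building polygons. I take these pieces as vertices of a bipartite graph $T$ whose other vertices are the double points, with a piece joined to each double point it passes through. This is the block--cut-vertex graph of the $4$-regular graph $G$, and the adjacency graph of pieces in the statement is $T$, or equivalently its contraction in which two pieces are adjacent when they share a double point. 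I will prove that $T$ is a tree.

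The key step is a general graph-theoretic fact. For any connected graph $G$, the block--cut-vertex graph is a tree. Here the blocks are the maximal $2$-connected subgraphs (or bridges), and the cut vertices are adjacent to the blocks containing them. I would recall the short proof. $T$ is connected because $G$ is connected and consecutive edges along a path in $G$ lie in blocks sharing a vertex. $T$ is acyclic because a cycle $B_1 v_1 B_2 v_2 \cdots B_k v_k B_1$ with $k\ge 2$ would let one form a cycle in $G$ through $B_1,\dots,B_k$. The union $B_1\cup\cdots\cup B_k$ would then be $2$-connected, contradicting the maximality of the blocks.

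It then remains to identify blocks of $G$ with the elementary pieces, which uses the tree-like hypothesis. Every vertex $v$ is a cut vertex, and by $4$-regularity removing $v$ separates $G$ into exactly two components, each receiving two of the four half-edges at $v$. These two half-edges are consecutive along the parametrization, because each component of $c\setminus\{v\}$ is the image of an arc of $S^1$. I would then argue that a block $B$ contains no cut vertex in its interior. Consequently $B$ is traversed by a single arc of $S^1$ that returns to its starting vertex, and every vertex of $B$ has degree $2$ in $B$. A connected $2$-regular graph is a cycle, so $B$ is a simple closed curve, which is exactly an elementary piece. Conversely, each piece is a cycle lying in a single block, and the previous sentence shows it fills that block.

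The main obstacle I expect is making this identification rigorous, namely showing that each block is $2$-regular. The case to rule out is a vertex $v$ where all four half-edges lie in one block. The plan is to observe that $v$ separates $G$, so its four half-edges cannot all lie in a common $2$-connected piece: two of them reach one component of $G\setminus v$ and two reach the other. Hence $\deg_B v\le 2$, and $\deg_B v\ge 2$ because blocks with cycles have minimum degree $2$. The degenerate bridge case cannot occur, since every edge lies on a closed loop of the curve. With $2$-regularity in hand, the remaining steps are routine.
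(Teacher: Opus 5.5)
Your argument is correct, but it follows a different route from the paper's.

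\textbf{The paper's route.} The paper argues directly by contradiction. If the piece-adjacency graph contained a cycle, then any double point on it would lie on two disjoint routes between the two pieces it joins. Deleting that double point would then leave the curve connected, contradicting tree-likeness. The paper takes for granted that the pieces are simple closed curves glued pairwise at double points, and it does not address connectedness of the adjacency graph separately.

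\textbf{Your route.} You first identify the pieces with the blocks of the $4$-regular graph. Each double point splits the curve into exactly two arcs, each carrying two of the four half-edges, so every block is $2$-regular and hence a cycle. You then import the general theorem that the block--cut-vertex graph of a connected graph is a tree.

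\textbf{Comparison.} Your approach is heavier, but it gives more:
\begin{enumerate}[label=(\roman*)]
\item it justifies that the building polygons are simple cycles;
\item it shows that each double point lies on exactly two of them, so your bipartite graph $T$ is a subdivision of the paper's adjacency graph and the two notions of ``tree'' agree;
\item it yields connectedness for free.
\end{enumerate}
The paper's argument is essentially the acyclicity half of the block--cut-vertex theorem, specialized to this situation.

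\textbf{Points to tidy.}
\begin{enumerate}[label=(\roman*)]
\item The claim that a block ``contains no cut vertex in its interior'' is confusing, since in a tree-like curve every vertex of $G$ is a cut vertex. The real work is your degree count. State it as: $B\setminus v$ is connected, so all half-edges of $B$ at $v$ come from one side of $v$, giving $\deg_B v\le 2$. Saying only that all four half-edges cannot lie in one block rules out degree $4$ but not degree $3$.
\item Building $1$-gons are loops and $2$-gons are digons. You should therefore work with the multigraph convention in which loops and bonds count as blocks.
\end{enumerate}
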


\begin{proof}
If the adjacency graph contained a cycle, then every double point corresponding to an edge of that cycle would lie on two disjoint routes between the same pieces.  Removing that double point would therefore not disconnect the curve, contradicting the definition of tree-likeness.  Conversely, the absence of such cycles is exactly the assertion that each double point separates the curve into two parts.
\end{proof}

We shall call the elementary pieces of this decomposition \emph{building polygons}.  A building polygon with $k$ sides will be called a building $k$-gon.  The sides of these polygons are not required to be straight; the terminology only records their incidence combinatorics.

\begin{figure}[t]
\centering
\begin{tikzpicture}[scale=.9,>=Stealth]
\begin{scope}
\node at (0,1.75) {a tree of blocks};
\node[draw,rounded corners,minimum width=.9cm,minimum height=.55cm] (a) at (0,0) {$B_0$};
\node[draw,rounded corners,minimum width=.9cm,minimum height=.55cm] (b) at (-1.45,-1) {$B_1$};
\node[draw,rounded corners,minimum width=.9cm,minimum height=.55cm] (c) at (0,-1.05) {$B_2$};
\node[draw,rounded corners,minimum width=.9cm,minimum height=.55cm] (d) at (1.45,-1) {$B_3$};
\node[draw,rounded corners,minimum width=.9cm,minimum height=.55cm] (e) at (0,-2.1) {$B_4$};
\draw[thick] (a)--(b); \draw[thick] (a)--(c); \draw[thick] (a)--(d); \draw[thick] (c)--(e);
\end{scope}
\begin{scope}[xshift=5.7cm]
\node at (0,1.75) {an annular necklace};
\foreach \i/\ang in {1/90,2/18,3/-54,4/-126,5/162}{
  \node[draw,rounded corners,minimum width=.75cm,minimum height=.5cm] (n\i) at ({1.25*cos(\ang)},{1.25*sin(\ang)}) {$B_\i$};
}
\draw[thick] (n1)--(n2)--(n3)--(n4)--(n5)--(n1);
\draw[gray] (0,0) circle (.63); \draw[gray] (0,0) circle (1.9);
\end{scope}
\end{tikzpicture}
\caption{A tree-like shadow has a tree as block-adjacency graph.  A tree--necklace shadow allows separated annular cyclic blocks.}
\label{fig:block-graphs}
\end{figure}

\section{Coorientations and admissibility}

On an immersed arc without inflection points there is a well-defined convex side.  Thus an inflection-free realization of a curve induces a coorientation on every side of every building polygon.

\begin{definition}
A \emph{local coorientation} of a shadow is a choice, for each side of each building polygon, of one of its two local sides.  A side is called \emph{outward} if the chosen side points away from the domain bounded by the corresponding building polygon, and \emph{inward} otherwise.
\end{definition}

\begin{figure}[t]
\centering
\begin{tikzpicture}[scale=.9,>=Stealth]
\coordinate (A) at (-1.5,-.8);
\coordinate (B) at (1.5,-.8);
\coordinate (C) at (0,1.1);
\draw[very thick] (A) .. controls (-.7,-1.05) and (.7,-1.05) .. (B)
                 .. controls (1.7,.25) and (.65,1.15) .. (C)
                 .. controls (-.65,1.15) and (-1.7,.25) .. (A);
\node at (0,-.05) {$B$};
\draw[->,thick] (0,-.9)--(0,-1.45) node[below] {outward};
\draw[->,thick] (1.18,.25)--(1.72,.48);
\draw[->,thick] (-1.18,.25)--(-1.72,.48);
\begin{scope}[xshift=5.0cm]
\coordinate (D) at (-1.2,-.55); \coordinate (E) at (1.2,-.55); \coordinate (F) at (0,.95);
\draw[very thick] (D) .. controls (-.4,-.75) and (.4,-.75) .. (E)
                 .. controls (1.35,.25) and (.55,.9) .. (F)
                 .. controls (-.55,.9) and (-1.35,.25) .. (D);
\node at (0,-.03) {$B'$};
\draw[->,thick] (0,-1.18)--(0,-.65) node[midway,right] {inward};
\draw[->,thick] (.9,.2)--(.35,.05);
\draw[->,thick] (-.9,.2)--(-.35,.05);
\end{scope}
\end{tikzpicture}
\caption{A coorientation records the convex side of each inflection-free side of a building polygon.}
\label{fig:coorientations}
\end{figure}

The following admissibility conditions are those appearing in inflection criterion \cite{ShapiroTree}.

\begin{definition}\label{def:admissible}
A local coorientation is called \emph{admissible} if:
\begin{enumerate}[label=(\alph*)]
\item every building $1$-gon is outward cooriented;
\item every building $2$-gon has at least one outward cooriented side;
\item if a building $k$-gon, $k\ge 3$, has all its sides inward cooriented, then the domain bounded by it contains at most $k-3$ neighboring building polygons.
\end{enumerate}
\end{definition}

The first two conditions exclude the obvious local impossibilities.  The third one is the only genuinely polygonal condition: a curvilinear $k$-gon whose sides are convex inward cannot carry too many mutually independent attachments in its interior.

\section{The tree-like realization theorem}

We now record the tree-like realization theorem of \cite{ShapiroTree} in the form needed below.

\begin{theorem}[inflectionless criterion]\label{thm:shapiro}
Let $c$ be a tree-like generic plane curve.  Then $c$ is carried by an orientation-preserving diffeomorphism of the plane to a curve without inflection points if and only if its decomposition admits an admissible local coorientation.
\end{theorem}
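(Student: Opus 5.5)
We prove the two directions separately, using throughout the tree structure of the building polygons (the Lemma of Section~2).

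\emph{Necessity.} Suppose $c$ is carried by an orientation-preserving diffeomorphism to an inflection-free curve $c'$. Since the diffeomorphism preserves the shadow combinatorics, we may work with $c'$ directly. On each side of each building polygon the curvature of $c'$ has constant sign, so the convex side is well defined. This gives a local coorientation, and we check the three conditions of Definition~\ref{def:admissible}.

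\begin{enumerate}[label=(\alph*)]
\item For a $1$-gon, the total turning of the tangent along the loop is $\pm(\pi+\theta)$, where $\theta$ is the angle at the double point. Constant curvature sign then forces the loop to be locally convex and bulging outward from the domain it bounds.
\item For a $2$-gon, if both sides were inward cooriented, the region would be a lens bounded by two arcs, each bending into it. The total tangent turning of the boundary, computed as $2\pi$ minus the exterior angles, would then be negative, which contradicts the rotation-number count for a simple closed curve.
\item For a $k$-gon with all sides inward, the same Gauss--Bonnet/turning count shows that the boundary turns by at most $(k-2)\pi$ minus the total exterior angle, which leaves deficit $\pi$ at each corner. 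Each neighboring polygon attached at an interior vertex lies inside the region. The curve entering that polygon must turn by an amount that the concave boundary cannot absorb. Summing these turning contributions bounds the number of interior attachments by $k-3$.
\end{enumerate}

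Condition (c) is the delicate part of this direction. I would prove it by counting, for the domain $D$ bounded by the $k$-gon, the total rotation of the tangent along the part of $c'$ lying inside $D$.

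\emph{Sufficiency.} This is the constructive direction, and I would argue by induction on the number of building polygons, rooting the block tree at some polygon $B_0$. Given an admissible coorientation, realize each building polygon separately as a curvilinear polygon. Its sides are inflection-free arcs with the prescribed convexity, and its corner angles are free parameters. Outward sides are easy to realize. For inward sides, condition (b) guarantees that a $2$-gon is a crescent rather than an impossible lens. Condition (c) guarantees that a concave $k$-gon still has room for the required number of attached children. Concretely, a "star" $k$-gon with $k$ concave sides has $k$ cusp-like corners, and we can arrange that $k-3$ of its corners, or regions near them, accommodate inner attachments. Then glue the pieces along the cut vertices following the tree.

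At each double point, the two arcs crossing there must match tangentially across the vertex, each being a $C^1$ continuation of the other. Because the block graph is a tree, these matching conditions can be satisfied independently: we scale and rotate each child subtree, which is realized by induction, into a small neighborhood of its attaching vertex. We then adjust its corner angle to agree with the parent's tangent directions, which fixes the arc directions. Finally we check that the glued curve has the same shadow up to orientation-preserving isotopy, meaning the same nesting of children inside or outside each polygon. The smooth Schoenflies theorem then produces the required diffeomorphism of the plane.

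\emph{Main obstacle.} The hardest step is the local model for an all-inward $k$-gon carrying its maximal $k-3$ interior neighbors. Tangent continuity at a vertex forces the child's arcs to continue the parent's concave sides, so the child curve emerges at an angle determined by the parent. We need to show that $k-3$ such emergences fit inside the polygon without creating inflections. I would first build explicit models: a deltoid-like concave triangle with no interior children, and a concave quadrilateral with one child. Then I would induct on $k$ by cutting off a corner, which is exactly the step where the constant $3$ should come from.
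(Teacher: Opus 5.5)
You follow the same route as the paper's sketch: turning-number restrictions for necessity, and inductive gluing of small copies down the block tree for sufficiency. But the sufficiency direction has a genuine gap, and for the statement as written it cannot be closed.

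At a double point $v$, the branch of the curve that runs into $v$ along a side of $B$ leaves $v$ along a side of the neighboring polygon $B'$. For the glued curve to have no inflection there, it is not enough that the two arcs be $C^1$ continuations of each other. The signed curvature must keep its sign through $v$, i.e.\ the two prescribed convex sides must lie on the same side of the branch. Nothing in your construction enforces this:
\begin{itemize}
\item a local coorientation is chosen independently side by side;
\item conditions (a)--(c) only constrain each polygon separately;
\item rescaling or rotating a child subtree never changes the sign of its prescribed curvature.
\end{itemize}

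The figure eight shows that the step really fails. It is tree-like, and its unique admissible local coorientation has both $1$-gons outward. But the two loops are traversed in opposite senses, so both branches through the double point switch curvature sign. Indeed $\rot=0$ forces $\int\kappa\,ds=0$, hence inflections, in every realization. More generally, the all-outward choice is always admissible. So under the literal definitions the ``only if'' direction is vacuous, and the ``if'' direction would make every tree-like curve inflection-free. The paper's sketch passes over the same point (``no cyclic compatibility condition appears''); the missing condition is local, at each vertex, not cyclic.

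What you can actually prove is the conflict-free version, Corollary~\ref{cor:zero}. Its necessity part must also show that the induced coorientation has no conflicts. Each building polygon is the Seifert circle of the orientation-respecting smoothing at the cut vertices, so the curve traverses it coherently. Hence an inflection-free curve with $\kappa>0$ makes every counterclockwise polygon entirely outward and every clockwise one entirely inward, and only two coorientations are candidates. With that hypothesis, your gluing scheme, matching tangent directions \emph{and} curvature signs at each cut vertex, is the right strategy.

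Two smaller corrections.
\begin{itemize}
\item For (c), orient $\partial B$ counterclockwise with interior angles $\alpha_1,\dots,\alpha_k$. The turning count gives $\int_{\text{sides}}\kappa\,ds=\sum_i\alpha_i-(k-2)\pi$, which is negative when all sides are inward. Every corner whose neighbor lies inside $B$ is reflex ($\alpha_i>\pi$). This yields at most $k-3$ such corners directly, without analysing the part of the curve inside $D$.
\item Correspondingly, in your star model the interior attachments sit at reflex corners, not at cusp-like ones, and their number is fixed by the shadow rather than arranged.
\end{itemize}
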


\begin{proof}[Comment on the proof]
The necessity follows from the three local convexity restrictions in Definition~\ref{def:admissible}.  The sufficiency is constructive.  Since the block-adjacency graph is a tree, one starts from an exterior building polygon and realizes neighboring polygons inductively, each time using a sufficiently small affine copy with the prescribed convex sides.  No cyclic compatibility condition appears, because a new block is attached to the already constructed part at only one place.  This is precisely the argument of \cite{ShapiroTree}.
\end{proof}

\section{Normalized inflections}

For the minimization problem it is convenient to use a mild normal form.

\begin{definition}
A generic curve is called \emph{normalized} if all its inflections are placed in arbitrarily small neighborhoods of double points and no open edge contains an inflection.
\end{definition}

\begin{lemma}[Normalization]\label{lem:normalization}
Every generic curve is isotopic, through generic curves, to a normalized one without increasing the number of inflections.
\end{lemma}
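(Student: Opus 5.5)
The plan is to slide each inflection along its edge into a small neighborhood of a double point. Every deformation will be a curvature-controlled isotopy supported near a single edge.

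First I reduce to the case where every edge contains at most one inflection, modulo pairs. Fix an edge $e$ (an open arc between two consecutive double points, or a whole loop of the curve if there are no double points). On $e$ the signed curvature $\kappa$ has finitely many zeros, and we may assume they are all simple. If two consecutive zeros of $\kappa$ on $e$ bound a subarc on which $e$ turns by less than $\pi$, I cancel them. Concretely, I replace the subarc by an arc with the same endpoints and the same end tangents that is convex toward the majority side. This is the standard local move that removes two inflections. It lowers the count by two, and if the supporting region is thin it creates no new double points. In general I will not try to cancel; I only move inflections.

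The main step is transport along an edge. Parametrize $e$ by arc length $s\in(0,L)$ with curvature $\kappa(s)$. Choose any smooth function $\tilde\kappa$ with the following properties:
\begin{enumerate}[label=(\roman*)]
\item $\tilde\kappa=\kappa$ near $s=0$ and near $s=L$;
\item $\tilde\kappa$ has the same total integral $\int\tilde\kappa=\int\kappa$;
\item $\tilde\kappa$ has exactly the same number of sign changes as $\kappa$, all of them placed in $(0,\varepsilon)\cup(L-\varepsilon,L)$.
\end{enumerate}
Take the convex combination $\kappa_t=(1-t)\kappa+t\tilde\kappa$. It may have extra zeros for intermediate $t$, but that does not matter: only the endpoint $t=1$ needs to have few inflections.

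The curve with curvature $\kappa_t$ and the initial point and tangent of $e$ has the correct end tangent, by (ii). Its endpoint, however, may drift. I correct the drift with two further conditions on $\tilde\kappa$. These are the vanishing of two first moments, $\int e^{i\theta_t(s)}ds$ must equal the original chord. I impose them by adding small bumps of curvature of fixed sign in the interior of the edge. Bumps of fixed sign create no zeros, and two independent bump directions span $\R^2$. This is the step I expect to be the main obstacle. Sign-definite bumps can only adjust the chord in certain directions, so I may need to alter the arc length $L$ as well. In the worst case I redistribute the turning so that the chord constraint is solvable, using an open-mapping argument on the map from curvature profiles to endpoints.

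It remains to keep the isotopy generic, meaning the arc must not cross other parts of the curve. I will address this by making $\tilde\kappa$ differ from $\kappa$ only by a $C^0$-small amount away from the ends. Near the ends the modification is confined to an $\varepsilon$-neighborhood of the double point, and there the shape can be controlled directly. If the straight-line homotopy of curvatures nonetheless produces crossings, I do the transport in small steps, moving one inflection at a time by a short distance. Each step is a $C^1$-small deformation and hence an isotopy through generic curves. Iterating over all inflections and all edges yields a normalized curve with no more inflections than before. Edges without double points occur only for an embedded oval, where $\mu=0$ anyway.
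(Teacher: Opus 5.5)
\textbf{Verdict: genuine gap.} Your strategy is the paper's: slide each inflection along its edge into a vertex neighbourhood by a deformation supported near that edge, so that no new double points appear. The paper simply asserts that this can be done by a $C^\infty$-small isotopy inside disjoint tubular neighbourhoods of the edges. Your attempt to make this explicit stops at the step that carries all the content, and the patches you propose do not close it.

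\textbf{The chord condition.} Bumps of a single sign change $\int\tilde\kappa$, so they break (ii). Small bumps only move the endpoint a little, whereas shifting a sign change across a macroscopic stretch of the edge can produce a macroscopic drift, and every intermediate $\kappa_t$ must close up as well. An open-mapping argument is local around a profile that already closes, so it cannot produce a closing profile whose sign changes all lie near the ends. Your cancellation move has a related defect. Reversing the sign of $\kappa$ on the subarc between two consecutive zeros reverses the sign of its total turning, so its end tangents cannot be kept. A genuine cancellation must also modify the neighbouring arcs.

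\textbf{Non-collision, the decisive gap.} Here the edge-local approach cannot work as it stands. Requiring $\tilde\kappa-\kappa$ to be $C^0$-small away from the ends is in general incompatible with (iii). A $C^0$-small change of curvature preserves its sign wherever $|\kappa|$ is bounded away from $0$, so no sign change can be pushed across such a stretch. This is also why the paper's ``$C^\infty$-small isotopy'' cannot be taken literally.

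Concretely, take the S-shaped edge $y=x^3$, $|x|\le 1$, with both end germs fixed. Moving the inflection close to $x=1$ (the case $x=-1$ is symmetric) makes the arc concave on $[-1,1-\delta]$. If it is a graph, it then lies essentially above the chord $y=x$, whereas the original lies below that chord for $0<x<1$. A winding-number argument shows that every point between $y=x^3$ and $y=x$ there is crossed by any deformation rel endpoints. Other strands of the shadow may lie in that region, and pushing them aside by an ambient isotopy deforms other edges and can create inflections on them. Your ``small steps'' fallback only shows that each step preserves genericity until some strand is hit. It does not show that a target arc exists in the complement of the rest of the curve.

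\textbf{What is missing: a global argument.} Where a realization theorem is available (tree-like curves, Theorem~\ref{thm:shapiro}), the following works.
\begin{enumerate}[label=(\arabic*)]
\item On each edge, choose a curvature sign that already occurs on it, taking the outward one whenever the edge contains an inflection.
\item Suppose an edge carries the sign sequence $s_1,\dots,s_k$ and is preceded and followed along the traversal by signs $a$ and $b$. Replacing the block by a single $x\in\{s_1,\dots,s_k\}$ does not increase the number of sign changes. Hence the resulting coorientation has at most as many conflicts as the curve has inflections.
\item The coorientation is admissible. Conditions (a)--(c) only become easier when sides are made outward, and polygons all of whose sides are inflection-free satisfy them by the necessity argument.
\item Redraw the whole curve from this coorientation, as in the construction in the proof of Theorem~\ref{thm:formula}, instead of deforming one edge at a time.
\end{enumerate}
For arbitrary shadows this does not yield a normalization, but the sign choice alone already proves Theorem~\ref{thm:locallower} directly.
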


\begin{proof}
Cut the curve at its double points.  On each open edge, inflections can be moved by a $C^\infty$-small isotopy along the edge toward one of its endpoints.  If two consecutive inflections on the same edge have opposite changes of convexity, the usual local cancellation of a pair of zeros of curvature removes them.  Repeating this on the finitely many edges leaves all remaining changes of convexity in small vertex neighborhoods.  The isotopy may be chosen inside pairwise disjoint tubular neighborhoods of the edges and hence does not create new double points.
\end{proof}

Thus, in a normalized representative, inflections are detected by mismatches of convex coorientations at double points.

\begin{definition}\label{def:conflict}
Let $C$ be a local coorientation.  At a double point, two sides belonging to two adjacent building polygons are glued.  The double point is called a \emph{conflict} of $C$ if the two induced convex sides are incompatible after this gluing, i.e. if smoothing the two incident arcs with nonzero curvature would force opposite choices of convex side.  Let
\[
 \conf(C)
\]
be the number of conflicts of $C$.
\end{definition}

This definition is purely local and depends only on the two cooriented sides meeting at the corresponding vertex.

\begin{lemma}[Evenness]
\label{lem:evenconflicts}
For every complete local coorientation of a connected shadow, the number $\conf(C)$ is even.  Consequently, all exact formulae below automatically respect the elementary fact that a closed generic curve has an even number of sign-changing inflection points.
\end{lemma}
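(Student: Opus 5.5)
We reduce the statement to a parity count along the curve, traversed once as a closed loop. Fix a local coorientation $C$ and an orientation of $S^1$. Every side of every building polygon is an edge of the $4$-graph traversed exactly once by the parametrization $c$, so $C$ assigns to each edge a convex side. Using the orientation of traversal, we convert this into a sign $\epsilon(e)\in\{\pm1\}$: $\epsilon(e)=+1$ if the chosen convex side lies to the left of the direction of travel, and $-1$ otherwise. This is the sign the signed curvature would have on an inflection-free realization of that edge.

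The first step is to check that Definition~\ref{def:conflict} is equivalent to this sign test. At a double point $v$, the curve passes through $v$ twice. Each passage joins an incoming edge $e$ to an outgoing edge $e'$ that continues smoothly across $v$ (they are the two opposite half-edges). Gluing the two arcs through one branch of $v$ and smoothing them with nonzero curvature is possible exactly when $\epsilon(e)=\epsilon(e')$. A conflict is precisely a passage where $\epsilon(e)\neq\epsilon(e')$. In that case any smooth realization must have an odd number of zeros of curvature with sign change near $v$, and after Lemma~\ref{lem:normalization} it has exactly one normalized inflection there. I would state this carefully and count conflicts per passage, i.e.\ per branch through the vertex, since a vertex carries two branches.

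The second step is the parity count itself. List the edges in cyclic order of traversal, $e_1,e_2,\dots,e_n,e_{n+1}=e_1$. Each consecutive transition $e_i\to e_{i+1}$ happens at one passage through a vertex, and every passage arises exactly once. Hence $\conf(C)=\#\{i:\epsilon(e_i)\neq\epsilon(e_{i+1})\}$. Going around the cycle, the sequence $\epsilon(e_1),\dots,\epsilon(e_n),\epsilon(e_1)$ returns to its starting value. The number of sign changes in a cyclic $\pm1$ sequence is therefore even, since $\prod_i \epsilon(e_i)\epsilon(e_{i+1})=\prod_i\epsilon(e_i)^2=1$. The consequence about inflections then follows: in a normalized representative the sign-changing inflections correspond bijectively to conflicts, and their number is even for the same reason.

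The main obstacle is the first step, namely making Definition~\ref{def:conflict} precise enough. The wording ``two sides belonging to two adjacent building polygons are glued'' is ambiguous about whether a vertex contributes one or two potential conflicts. It is also unclear whether convex sides are compared across the branch (along the curve) or across the polygons (transversally). I would fix the interpretation per branch, along the direction of traversal, and verify it against the intended meaning: inflections in a normalized curve sit on branches near vertices. Under the transversal interpretation the count could fail to be even, so I would argue that the per-branch interpretation is the one forced by the smoothing phrase. Connectedness of the shadow is used only to guarantee that the edges form a single traversal cycle. The argument actually gives evenness on each closed component separately.
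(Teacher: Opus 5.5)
Your argument is correct and is essentially the paper's own proof. Both attach one sign per open arc of the Eulerian traversal, recording on which side of the direction of travel the convex side lies. Both read conflicts as sign changes at passages through double points, and both conclude from the fact that a cyclic $\pm1$ sequence has an even number of sign changes.

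Your per-passage reading of Definition~\ref{def:conflict} is the one the paper's proof implicitly adopts. Under it a double point may carry two conflicts, as in the figure-eight, which needs two inflections with one double point.

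In the consequence, claim only a parity correspondence rather than a bijection. A normalized curve may have an odd number of inflections larger than one near a conflicting passage.
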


\begin{proof}
Read the coorientation as a sign attached to each open arc of the Eulerian traversal of the shadow: the sign tells on which side of the oriented arc the convex side lies.  A conflict is exactly a place where this sign changes when one passes through a small neighborhood of a double point in the normalized model.  A circular sequence of signs has an even number of sign changes.  Equivalently, the product of all successive transition signs along the closed parametrizing circle is $+1$.
\end{proof}

\section{A finite formula for tree-like curves}

The realization criterion immediately gives a finite model for the minimization problem.

\begin{theorem}\label{thm:formula}
Let $c$ be a tree-like generic plane curve.  Then
\[
 \mu([c])=
 \min_{C\in\A(c)} \conf(C),
\]
where $\A(c)$ is the finite set of admissible local coorientations of the building polygons of $c$.
\end{theorem}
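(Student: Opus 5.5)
We prove the two inequalities $\mu([c])\ge \min_{C\in\A(c)}\conf(C)$ and $\mu([c])\le \min_{C\in\A(c)}\conf(C)$ separately.

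\emph{Lower bound.} Take $c'\in[c]$ realizing $\mu([c])$. By Lemma~\ref{lem:normalization} we may replace $c'$ by a normalized curve with at most $\mu([c])$ inflections, all located in small neighborhoods of double points. Every side of every building polygon is then an inflection-free arc, so it has a well-defined convex side. This defines a local coorientation $C$. The number of inflections is at least $\conf(C)$: at a conflict vertex the convex side must switch while passing through the vertex neighborhood, which forces a zero of curvature there.

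It remains to show $C\in\A(c)$. Here we use the necessity half of Theorem~\ref{thm:shapiro}. The three restrictions of Definition~\ref{def:admissible} are convexity statements about a single building polygon whose sides are inflection-free arcs. Inflections sitting in tiny corner neighborhoods do not affect them: conditions (a) and (b) come from total-turning considerations along the sides, and (c) from the count of interior attachments versus the turning of an all-inward $k$-gon. To handle the corners cleanly, truncate each corner neighborhood; the polygon's boundary then consists of its inflection-free sides joined at corners. The local arguments of \cite{ShapiroTree} only use the turning of the sides and the fact that the exterior angles at corners lie in $(0,\pi)$, so they apply verbatim. Hence $\mu([c])\ge\conf(C)\ge\min_{\A(c)}\conf$.

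\emph{Upper bound.} Fix $C\in\A(c)$ attaining the minimum. Run the constructive sufficiency argument of Theorem~\ref{thm:shapiro}, rooted at an exterior polygon. It realizes each building polygon inductively by a small affine copy whose sides have the convex sides prescribed by $C$, attached at the single cut vertex shared with the already built part. The tree structure means no cyclic compatibility is needed, and the construction never used that the two arcs through a vertex have matching convexity. At each conflict vertex we glue the incident arcs through a local model containing exactly one inflection: smooth the two branches near the vertex so that each passes through the crossing with one sign change of curvature. At non-conflict vertices the glueing is inflection-free, as in Theorem~\ref{thm:shapiro}. This gives a curve isotopic to $c$, with the same shadow and cyclic order, and with exactly $\conf(C)$ inflections. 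Lemma~\ref{lem:evenconflicts} confirms the count is consistent with parity.

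\emph{Main obstacle.} The delicate step is the lower bound's claim that the coorientation read off a curve \emph{with} vertex inflections still satisfies (a)--(c). The necessity proof in \cite{ShapiroTree} is stated for inflection-free curves. I would first try the corner-truncation argument above, checking that condition (c) rests only on the turning of the sides and not on smoothness at the vertices. A secondary point is that the conflict definition (one inflection per conflict, none otherwise) matches the local gluing models exactly. I would verify this by listing the four convex-side configurations of two crossing branches at a vertex.
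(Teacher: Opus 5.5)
Your outline is the paper's own proof. You normalize, read the convex sides off the inflection-free edges, invoke the necessity half of Theorem~\ref{thm:shapiro} (your truncation argument is more careful than the paper's one-line appeal), and then run the rooted tree construction, inserting inflections at conflicts.

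\textbf{The gap: conflicts counted per double point.} The step you called secondary is where the argument breaks, and your own sentence shows it. Your local model at a conflict vertex is said to contain ``exactly one inflection'', yet each of the two branches ``passes through the crossing with one sign change of curvature''. That is two inflections. At a double point shared by $P$ and $Q$ the curve passes twice, each branch joining one side of $P$ to one side of $Q$, and the two branches match or mismatch independently.

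The figure-eight shows this is decisive. It is tree-like with one double point, and both lobes are $1$-gons, hence forced outward. The traversal turns counterclockwise on one lobe and clockwise on the other, so both branches mismatch. Rotation number $0$ forces sign changes of curvature, so $\mu=2$. If $\conf(C)$ counts conflicting double points, as Definition~\ref{def:conflict} literally says, then $\min\conf\le 1$. The claimed equality is then false, and no one-inflection local model exists at that vertex. The paper's proof uses the same ``single normalized inflection'' per conflicting double point, so you inherited this defect rather than created it, but your proof still has to resolve it.

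\textbf{The repair: count per passage.} Count conflicts per passage of the traversal through a double point, so $0$, $1$ or $2$ per vertex. All three occur: a crescent-shaped $2$-gon with an outside loop at each corner has exactly one mismatched passage at each vertex. This is also the only reading under which Lemma~\ref{lem:evenconflicts} holds, since its proof counts sign changes along the parametrizing circle. With this convention your argument closes.
\begin{itemize}
\item In the lower bound, each mismatched passage carries a zero of curvature on its own short parameter interval, and these intervals are disjoint.
\item In the upper bound, place one simple zero of curvature on each mismatched branch and none on matched ones. Do this by $C^1$-small changes inside a tiny disk, which keep the crossing transverse.
\end{itemize}

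\textbf{A smaller correction to the truncation argument.} Exterior angles lie in $(-\pi,\pi)$, not $(0,\pi)$. They are negative exactly at the corners whose neighbouring polygon lies inside $P$, and condition (c) is precisely a bound on those corners. Let $\alpha_i$ be the turning of the $i$th side and $\theta_v$ the exterior angle at corner $v$. If all sides of a $k$-gon turn negatively and $j$ neighbours lie inside, then $2\pi=\sum_i\alpha_i+\sum_v\theta_v<(k-j)\pi$, so $j\le k-3$. With all exterior angles positive you could not obtain (c).
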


\begin{proof}
First let $c'$ be a normalized representative of $[c]$.  On every side away from the vertex neighborhoods, the curve has a definite convex side.  Hence $c'$ determines a local coorientation $C$.  The necessity part of Theorem~\ref{thm:shapiro}, applied locally to the inflection-free sides of the decomposition, shows that $C$ is admissible.  Each normalized inflection occurs exactly at a vertex at which the two induced convex sides are incompatible.  Therefore
\[
 \#\{\text{inflections of }c'\}\ge \conf(C)\ge
 \min_{D\in\A(c)}\conf(D).
\]
Taking the minimum over normalized representatives and using Lemma~\ref{lem:normalization} gives the lower bound.

Conversely, take an admissible local coorientation $C$.  The constructive proof of Theorem~\ref{thm:shapiro} realizes all building polygons with the prescribed convex sides.  At every non-conflicting gluing one smooths the corner with curvature of the prescribed sign.  At every conflict, one inserts a single normalized inflection in a sufficiently small neighborhood of the corresponding double point.  Since the block-adjacency graph is a tree, these local choices are independent.  The resulting curve is isotopic to $c$ and has exactly $\conf(C)$ inflection points.  Minimizing over $C$ proves the reverse inequality.
\end{proof}

\begin{corollary}[Inflection-free criterion]\label{cor:zero}
A tree-like generic plane curve is isotopic to an inflection-free curve if and only if it admits an admissible local coorientation without conflicts.
\end{corollary}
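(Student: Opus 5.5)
The corollary should be read off from Theorem~\ref{thm:formula} as its $\mu([c])=0$ case, with Lemma~\ref{lem:normalization} handling representatives that are not normalized. The plan is to prove the two implications separately.

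\emph{Sufficiency.} Suppose $C\in\A(c)$ has $\conf(C)=0$. Run the constructive half of the proof of Theorem~\ref{thm:formula} with this $C$:
\begin{enumerate}[label=(\roman*)]
\item realize the building polygons inductively along the block tree, with the prescribed convex sides, as in Theorem~\ref{thm:shapiro};
\item smooth every gluing with curvature of the prescribed sign.
\end{enumerate}
Since no vertex is a conflict, no normalized inflection is ever inserted. The resulting curve is isotopic to $c$ and has $\conf(C)=0$ inflections. Equivalently, Theorem~\ref{thm:formula} gives $\mu([c])\le \conf(C)=0$.

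\emph{Necessity.} Suppose $c'\in[c]$ has no inflection points. Then $c'$ is trivially normalized, because it has no inflections to place, so no appeal to Lemma~\ref{lem:normalization} is needed. The curvature of $c'$ has a constant sign along the whole parametrizing circle. This sign induces a local coorientation $C$ on every side of every building polygon, and by the necessity part of Theorem~\ref{thm:shapiro} this $C$ is admissible. It remains to check that $\conf(C)=0$. At each double point, the two arcs being glued are consecutive arcs of the traversal of $c'$, and the convex side is continuous across the small vertex neighborhood because the curvature does not vanish there. So the compatibility required in Definition~\ref{def:conflict} holds at every vertex, and $C$ is conflict-free.

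The only step needing real care is this matching. The notion of conflict in Definition~\ref{def:conflict} is stated for the normalized model, and one has to confirm that "the signed curvature has no zero near the vertex" corresponds exactly to "no conflict at the vertex." This is precisely the identification already used in the lower-bound half of Theorem~\ref{thm:formula}, namely that each normalized inflection sits exactly at a conflicting vertex. If that identification is taken as given, the corollary is immediate from $\mu([c])=\min_{C\in\A(c)}\conf(C)$. The minimum is attained on the finite set $\A(c)$, and it equals $0$ if and only if some admissible $C$ has $\conf(C)=0$. The case $\A(c)=\emptyset$ is excluded on both sides: it admits no inflection-free realization by Theorem~\ref{thm:shapiro}, and it admits no conflict-free admissible coorientation.
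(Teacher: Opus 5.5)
Your proposal is correct and follows the paper's route: the corollary is the $\mu([c])=0$ case of Theorem~\ref{thm:formula}, since $\mu([c])=0$ means some representative of $[c]$ is inflection-free, and the finite minimum $\min_{C\in\A(c)}\conf(C)$ vanishes exactly when some admissible $C$ is conflict-free. Your separate treatment of $\A(c)=\emptyset$ is harmless but vacuous, because the all-outward coorientation is always admissible, as used in the proof of Corollary~\ref{cor:upper}.
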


\begin{corollary}\label{cor:upper}
If a tree-like curve has $n$ double points, then
\[
 \mu([c])\le n.
\]
\end{corollary}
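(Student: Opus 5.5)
By Theorem~\ref{thm:formula}, $\mu([c])=\min_{C\in\A(c)}\conf(C)$. Since $\conf(C)$ counts a subset of the double points, $\conf(C)\le n$ for every local coorientation $C$. So the corollary reduces to showing that $\A(c)$ is nonempty: every tree-like shadow admits at least one admissible local coorientation. The conflicts may then be ignored entirely.

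To build such a coorientation, I will use the simplest choice: coorient every side of every building polygon \emph{outward}.
\begin{enumerate}[label=(\roman*)]
\item Condition (a) of Definition~\ref{def:admissible} holds, since all $1$-gons are outward.
\item Condition (b) holds, since every side of every $2$-gon is outward.
\item Condition (c) is vacuous, since no $k$-gon with $k\ge3$ has all its sides inward.
\end{enumerate}
Hence the all-outward coorientation $C_{\mathrm{out}}$ lies in $\A(c)$. This gives
\[
\mu([c])\le \conf(C_{\mathrm{out}})\le n .
\]

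The only point needing care is that $C_{\mathrm{out}}$ is a legitimate element of $\A(c)$ in the sense used in Theorem~\ref{thm:formula}. The definition of local coorientation allows an independent choice on each side of each building polygon, and admissibility involves only conditions (a)--(c), so there is no hidden compatibility requirement. If compatibility at gluings were required, that is exactly what $\conf$ measures, and it costs at most one inflection per double point. I expect no genuine obstacle here.

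As a sanity check on sharpness of the method, note that at each double point at most one normalized inflection is inserted. This is consistent with the construction in the proof of Theorem~\ref{thm:formula}, where each conflict costs exactly one inflection. Lemma~\ref{lem:evenconflicts} additionally shows that $\conf(C_{\mathrm{out}})$ is even, so the bound can be refined to $2\lfloor n/2\rfloor$.
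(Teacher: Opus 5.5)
Your argument is the paper's own proof step for step: the all-outward coorientation is admissible, there is at most one conflict per double point, and Theorem~\ref{thm:formula} finishes. The trouble is that this shared counting step is false, and so is the bound.

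A double point is passed twice by the traversal. Two branches of the curve go through it, each running from a side of one adjacent building polygon to a side of the other. In the normalized model each branch can independently be forced to carry an inflection.

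The figure eight shows this. It is tree-like with $n=1$, and its two building polygons are $1$-gons, so both are outward by Definition~\ref{def:admissible}(a). On \emph{each} of the two branches through the vertex, the convex side jumps from the interior of one loop to the interior of the other. So two inflections are needed. Independently, $\rot=0$ means the total curvature is $0$, so the curvature must change sign, and an even number of times. For instance, the lemniscate $(\sin t,\sin t\cos t)$ has curvature proportional to $-\sin t\,(2\cos^2 t+1)$, which vanishes exactly at the two passages through the origin. Thus $\mu=2>n=1$.

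Your sanity check, ``at each double point at most one normalized inflection is inserted,'' is exactly where the proof breaks. Your refinement to $2\lfloor n/2\rfloor$ makes the inconsistency visible: for the figure eight it predicts $\mu=0$, which is impossible when $\rot=0$.

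The per-vertex bound $\conf(C)\le n$ cannot be combined with Lemma~\ref{lem:evenconflicts} and Theorem~\ref{thm:formula}. The proof of that lemma counts sign changes along the parametrizing circle, i.e.\ per passage through a vertex, not per vertex. If conflicts are counted per vertex and are even, the figure eight gets $\conf=0$, and Theorem~\ref{thm:formula} would then claim $\mu=0$.

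With conflicts counted per passage, the reading consistent with that lemma and with the figure eight, your all-outward argument can at best give $\mu([c])\le 2n$. The figure eight shows that $2n$ cannot be replaced by $n$.
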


\begin{proof}
For any admissible local coorientation there is at most one conflict at each double point.  Theorem~\ref{thm:formula} gives the claim.  Existence of at least one admissible coorientation is obtained by choosing all sides outward.
\end{proof}

\section{Dynamic programming on the block tree}

The formula in Theorem~\ref{thm:formula} is finite but can be made more explicit.  Let $T(c)$ be the block-adjacency tree.  Root it at an exterior building polygon.  For a building polygon $B$, let $E(B)$ be the set of sides of $B$ along which descendants are attached.

For each possible coorientation of the side by which $B$ is attached to its parent, define
\[
 F_B(\varepsilon)
\]
to be the minimal number of conflicts in the subtree rooted at $B$, under the condition that the parent side induces boundary state $\varepsilon\in\{+,-\}$ at the root side of $B$.  The values are computed by minimizing over all coorientations of the remaining sides of $B$ satisfying Definition~\ref{def:admissible}, adding the local conflict costs and the already computed values of the children.

\begin{proposition}\label{prop:dp}
For tree-like curves, $\mu([c])$ is computable by dynamic programming on the block tree.  If the maximal number of sides of a building polygon is bounded, the computation is linear in the number of double points.
\end{proposition}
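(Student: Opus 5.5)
The plan is to reduce Proposition~\ref{prop:dp} to Theorem~\ref{thm:formula}, which gives $\mu([c])=\min_{C\in\A(c)}\conf(C)$, and then show that this finite minimum decomposes along the rooted block tree $T(c)$. The first point to check is locality. Admissibility in Definition~\ref{def:admissible} is a condition on one building polygon at a time: for a $k$-gon $B$ it involves only the coorientations of the sides of $B$, together with the fixed combinatorial count of neighbouring polygons in the domain bounded by $B$. By Definition~\ref{def:conflict}, a conflict is a function of the two cooriented sides glued at a single double point. Since $c$ is tree-like, every double point is a cut vertex, so it is the gluing point of exactly one parent--child pair of polygons in $T(c)$. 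Hence
\[
\conf(C)=\sum_{\text{edges } (P,B) \text{ of } T(c)} \kappa\bigl(C|_P,C|_B\bigr),
\]
where each term depends only on the two sides meeting at that vertex. The set $\A(c)$ is a product over the polygons of the locally admissible choices.

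The second step is the recursion. For a polygon $B$ with children $B_1,\dots,B_m$ attached along sides in $E(B)$, and for each boundary state $\varepsilon$ imposed by the parent, set
\[
F_B(\varepsilon)=\min_{C_B}\Bigl(\kappa_{\mathrm{root}}(\varepsilon,C_B)+\sum_{i=1}^m F_{B_i}(\varepsilon_i(C_B))\Bigr).
\]
Here $C_B$ ranges over the coorientations of the sides of $B$ that are admissible for $B$ and compatible with $\varepsilon$, and $\varepsilon_i(C_B)$ is the state that $C_B$ induces at the attaching vertex of $B_i$. Correctness follows by induction on the height of $B$. Once the coorientation of $B$ is fixed, the subtrees below the different children share no polygon and no vertex, because each vertex is a cut vertex. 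The minimum over their joint coorientations is therefore the sum of the separate minima. At the exterior root there is no parent, so the recursion is run without $\varepsilon$, and the resulting minimum equals $\min_{\A(c)}\conf$, which is $\mu([c])$ by Theorem~\ref{thm:formula}.

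The third step is complexity. Each polygon is processed once and has two boundary states. If every polygon has at most $K$ sides, there are at most $2^K$ candidate $C_B$. Checking condition (c) requires the number of neighbouring polygons inside $B$, which is precomputed once from the planar embedding. So the work at $B$ is $O(2^K\deg B)$, and summing over the tree gives $O(2^K\cdot\#\text{edges of }T(c))$. The edges of $T(c)$ correspond to double points, so for bounded $K$ the total is linear in $n$. The initial cutting of the shadow into polygons and the construction of $T(c)$ are also linear by a traversal of the planar $4$-graph.

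I expect the main obstacle to be the first step, namely making precise that a conflict depends only on the two glued sides and that the state passed across a vertex is a single bit $\varepsilon\in\{+,-\}$. One subtlety is that condition (c) refers to neighbouring polygons lying inside $B$, which may include the parent. That count is combinatorial data fixed by the shadow, independent of $C$, so it does not break locality. I would handle this by stating that the count is part of the input for each node.
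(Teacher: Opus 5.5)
Your proposal is correct and follows the paper's argument. You reduce to Theorem~\ref{thm:formula}, observe that fixing the state at the parent attachment makes the descendant subtrees independent (so the Bellman recursion for $F_B(\varepsilon)$ computes the minimum), and bound the work per block by its at most $2^k$ local coorientations to get linear total time; what you add is only detail the paper leaves implicit, namely per-polygon locality of admissibility and of the conflict cost, the induction on height, and precomputing the interior-neighbour counts needed for condition (c).
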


\begin{proof}
Because $T(c)$ is a tree, once the coorientation state at the side connecting a block to its parent is fixed, the choices in the descendant subtrees are independent.  Thus the Bellman recursion described above computes the minimum in Theorem~\ref{thm:formula}.  For a block with $k$ sides there are at most $2^k$ local coorientation choices.  If $k$ is uniformly bounded, the cost per block is bounded by a constant, and the total time is linear in the number of blocks, hence in the number of double points.
\end{proof}

\section{The local lower bound for arbitrary embedded shadows}

We now separate what survives for all embedded planar shadows from what is special to the tree-like case.

\begin{definition}
Let $\Gamma$ be a connected embedded planar $4$-regular shadow equipped with an Eulerian traversal.  Denote by $\A_{\loc}(\Gamma)$ the finite set of local coorientations satisfying the polygonal conditions of Definition~\ref{def:admissible} on every building polygon of $\Gamma$.  Define the \emph{local inflection number}
\[
 \mu_{\loc}(\Gamma)=\min_{C\in\A_{\loc}(\Gamma)}\conf(C).
\]
\end{definition}

\begin{theorem}[Universal local lower bound]\label{thm:locallower}
For every generic immersed curve with embedded shadow $\Gamma$,
\[
 \mu([\Gamma])\ge \mu_{\loc}(\Gamma).
\]
Here $[\Gamma]$ denotes the plane isotopy class of generic immersions with the fixed shadow and fixed Eulerian traversal.
\end{theorem}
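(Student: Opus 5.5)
The plan is to repeat the lower-bound half of the proof of Theorem~\ref{thm:formula}, observing that it never used the tree structure of the block-adjacency graph. Let $c'$ be any curve in $[\Gamma]$. By Lemma~\ref{lem:normalization}, we may replace $c'$ by a normalized curve $c''$ in the same class with no more inflections. Since the normalization isotopy runs through generic curves and creates no new double points, $c''$ still has shadow $\Gamma$ with the same Eulerian traversal. It therefore suffices to prove $\#\{\text{inflections of }c''\}\ge\mu_{\loc}(\Gamma)$ for normalized $c''$.

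First, I would extract the coorientation. In $c''$ every open edge, after deleting small vertex neighborhoods, has nonvanishing curvature, so it has a well-defined convex side. Each side of each building polygon of $\Gamma$ is such an edge, so we obtain a local coorientation $C(c'')$.

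Second, I would check that $C(c'')\in\A_{\loc}(\Gamma)$. Conditions (a)--(c) of Definition~\ref{def:admissible} concern a single building polygon together with the domain it bounds. The necessity argument of Theorem~\ref{thm:shapiro} is local in exactly this sense: it uses only the convexity of the sides of one polygon and the polygons attached inside it, never the global tree structure. For (a), a closed loop whose single side is convex with no inflection must be convex outward, since the total turning of a simple loop with one corner forces this. For (b), a $2$-gon cannot have both sides convex inward, by the same turning count. For (c), an all-inward curvilinear $k$-gon has its total turning concentrated at corners. Neighbouring polygons attached in its interior must be attached at corners, and the turning budget $2\pi=\sum(\pi-\text{exterior angles})$ leaves room for at most $k-3$ of them. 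This is the main obstacle. I would invoke \cite{ShapiroTree} for each polygon separately, checking that its proof of necessity uses only the restriction of the curve to a neighborhood of the closed polygon and its interior. In particular, ``neighboring building polygons'' inside the domain must be counted in $\Gamma$ itself, which is what the definition of $\A_{\loc}$ does.

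Third, I would count inflections. At a conflict vertex in the sense of Definition~\ref{def:conflict}, the two convex sides entering the small vertex neighborhood along a traversal branch are incompatible. The signed curvature along that branch therefore changes sign inside the neighborhood, giving at least one inflection there, and distinct vertices have disjoint neighborhoods. Hence
\[
\#\{\text{inflections of }c''\}\ge\conf(C(c''))\ge\mu_{\loc}(\Gamma).
\]
Combining this with the first reduction and minimizing over $c'\in[\Gamma]$ gives $\mu([\Gamma])\ge\mu_{\loc}(\Gamma)$. No realization statement is needed, which is why the bound holds for all embedded shadows.
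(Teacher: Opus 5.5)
Your proposal is correct and is essentially the paper's argument. You normalize by Lemma~\ref{lem:normalization} (applied to an arbitrary representative instead of a minimizer, which is equivalent), read off the convex-side coorientation, use the per-polygon necessity argument of \cite{ShapiroTree} to get $C\in\A_{\loc}(\Gamma)$, and charge each conflict to an inflection in its own vertex neighbourhood. You justify that last step in the direction the inequality actually needs (a conflict forces a sign change of $\kappa$), whereas the paper words it as inflections producing conflicts.

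The only slip is in your heuristic for (c). For a counterclockwise all-inward $k$-gon the signed exterior angles satisfy $\sum_i\theta_i=2\pi-\int\kappa\,ds>2\pi$, with every $\theta_i<\pi$ and $\theta_i<0$ at each corner where a neighbour lies in the domain; this is what gives the bound $k-3$. Since you defer to \cite{ShapiroTree} at that point, it does not affect the proof.
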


\begin{proof}
Take a normalized representative realizing $\mu([\Gamma])$.  Its inflection-free open edges determine local convex-side coorientations on all building polygons.  The same local convexity arguments used in the necessity part of inflection theorem imply the three admissibility conditions on every building polygon, see \cite{ShapiroTree}.  Each normalized inflection produces a conflict at the corresponding double point.  Therefore the number of inflections is at least $\conf(C)$ for the induced $C\in\A_{\loc}(\Gamma)$, and the result follows by minimizing over $C$.
\end{proof}

\section{Cycle holonomy}

Let $G_B(\Gamma)$ be the block-adjacency graph of $\Gamma$.  Its vertices are building polygons and its edges are double points at which two building polygons are glued.  When $G_B(\Gamma)$ is not a tree, a coorientation can be transported around a cycle.

\begin{definition}\label{def:holonomy}
Let $Z$ be an oriented simple cycle in $G_B(\Gamma)$ and let $C\in\A_{\loc}(\Gamma)$.  For an edge $e$ of $Z$, put
\[
 \tau_e(C)=
 \begin{cases}
 +1,&\text{if the two coorientations match across the double point }e,\\
 -1,&\text{if }e\text{ is a conflict.}
 \end{cases}
\]
The \emph{coorientation holonomy} of $C$ around $Z$ is
\[
 \Hol_Z(C)=\prod_{e\in Z}\tau_e(C)\in\{\pm1\}.
\]
We call $C$ \emph{cycle-compatible} if $\Hol_Z(C)=+1$ for every cycle $Z$ in $G_B(\Gamma)$.
\end{definition}

Since $\{\pm1\}$ is abelian, it is enough to check $\Hol_Z(C)=+1$ on any cycle basis of $G_B(\Gamma)$.

\begin{figure}[t]
\centering
\begin{tikzpicture}[scale=.88,>=Stealth]
\draw[thick,gray] (0,0) circle (.75);
\draw[thick,gray] (0,0) circle (1.7);
\foreach \ang/\lab in {40/{+},130/{-},220/{+},310/{-}}{
  \draw[very thick] (\ang:.75)--(\ang:1.7);
  \node at (\ang:2.12) {$\tau=\lab1$};
}
\draw[->,thick] (12:1.22) arc (12:95:1.22);
\node at (0,-2.35) {$\Hol_Z=\prod_{e\in Z}\tau_e$};
\begin{scope}[xshift=5.6cm]
\draw[thick,gray] (0,0) circle (.75);
\draw[thick,gray] (0,0) circle (1.7);
\foreach \ang/\lab in {40/{+},130/{+},220/{+},310/{-}}{
  \draw[very thick] (\ang:.75)--(\ang:1.7);
  \node at (\ang:2.12) {$\tau=\lab1$};
}
\draw[->,thick] (12:1.22) arc (12:95:1.22);
\node at (0,-2.35) {odd holonomy};
\end{scope}
\end{tikzpicture}
\caption{Convex-side transport around an annular necklace.  The left necklace has trivial holonomy; the right one has odd holonomy and cannot close with only the prescribed local data.}
\label{fig:holonomy}
\end{figure}

\begin{proposition}[Holonomy obstruction]\label{prop:holonomyobstruction}
Let $\Gamma$ be an embedded shadow and let $c$ be a normalized representative with induced local coorientation $C$.  For every cyclic chain of building polygons whose regular neighborhood is an annulus, one has
\[
 \Hol_Z(C)=+1.
\]
Consequently, a prescribed local pattern with odd coorientation holonomy cannot be realized as a closed annular chain with exactly those conflicts.  Correcting the pattern requires changing the parity of the conflict set on that cycle, and therefore costs at least one additional normalized inflection on the cycle.
\end{proposition}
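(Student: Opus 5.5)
The plan is to imitate the proof of Lemma~\ref{lem:evenconflicts}, replacing the Eulerian circle by a closed loop running around the annulus. Fix the cyclic chain $Z=(B_1,\dots,B_m)$ of building polygons, with $B_i$ glued to $B_{i+1}$ at the double point $e_i$ (indices mod $m$). Let $N$ be its regular neighborhood, an annulus by hypothesis. Inside $N$ I will choose a simple closed transversal path $\gamma$. It passes through the interior of each $B_i$ and crosses each $e_i$ through a small vertex neighborhood. Along $\gamma$ we get a sequence of arcs of $c$: one side $s_i^{\mathrm{in}}$ and one side $s_i^{\mathrm{out}}$ of each $B_i$, namely the sides through which the chain enters and leaves $B_i$. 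By normalization (Lemma~\ref{lem:normalization}) each such side is inflection-free in its interior, so it carries a definite convex side. That convex side is exactly the coorientation recorded by $C$.

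The key step is to define a $\pm1$-valued ``convex-side sign'' relative to a fixed reference framing of $N$. Since $N$ is an annulus, it has a globally defined transverse direction, pointing toward one chosen boundary component. Each side met by $\gamma$ then gets a sign $\sigma=+1$ if its convex side points toward that boundary and $\sigma=-1$ otherwise. I then want to check two things. First, inside a single $B_i$, passing from $s_i^{\mathrm{in}}$ to $s_i^{\mathrm{out}}$ the sign changes by a factor that depends only on the inward/outward data of $C$ on $B_i$, and that factor is absorbed into the convention for $\tau$. Second, across $e_i$ the sign changes exactly when $e_i$ is a conflict, that is, the factor is $\tau_{e_i}(C)$. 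Multiplying all transition factors around the closed loop $\gamma$ returns to the starting sign, so $\prod_i\tau_{e_i}(C)=+1$. Here the annulus hypothesis is essential. The reference framing is single-valued around $\gamma$ precisely because $N$ is orientable and has two distinct boundary components. On a M\"obius-type neighborhood the framing would flip once, and the product would be $-1$.

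For the consequences, suppose a prescribed pattern has $\Hol_Z=-1$. Then no normalized realization can induce exactly that pattern on the chain. Any realizing modification must change $\tau_e$ at some edge $e$ of $Z$, which toggles the conflict status at that vertex. Changing the parity of the number of conflicts on $Z$ forces an odd change in that number. When one starts from the locally optimal pattern, this is at least one extra conflict, hence, by the vertex/inflection correspondence of Theorem~\ref{thm:locallower}, at least one additional normalized inflection on the cycle.

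I expect the main obstacle to be the first check: that the in-polygon transition is determined by $C$ alone and matches the convention implicit in Definition~\ref{def:holonomy}. For this I would work locally in the planar embedding of $B_i$. The inward/outward labels are measured relative to the disc bounded by $B_i$. I would then compare that disc's normal with the annulus framing, which on $\partial B_i\cap N$ agrees or disagrees according to which side of $\gamma$ the disc lies. In the separated annular situation the disc lies on the same side every time, so these factors cancel pairwise around the cycle.
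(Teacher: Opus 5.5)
The gap is exactly at the step you flagged as the main obstacle. Multiplying your transition factors around $\gamma$ yields only
\[
 \prod_i\tau_{e_i}(C)\cdot\prod_i\rho_i=+1,\qquad \rho_i=\sigma(s_i^{\mathrm{in}})\,\sigma(s_i^{\mathrm{out}}).
\]
Definition~\ref{def:holonomy} defines $\tau_e$ solely by match/conflict at $e$, so there is no convention into which $\rho_i$ can be absorbed. You therefore need $\prod_i\rho_i=+1$, and this fails in general. If $s_i^{\mathrm{in}}$ and $s_i^{\mathrm{out}}$ face opposite boundary components of the annulus, then $\rho_i=-1$ precisely when the two sides carry the same in/out label. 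So $\rho_i$ depends on $C$, not only on where the disc of $B_i$ lies, and nothing makes these factors cancel. Nor can such switches be avoided. For the comparison across $e_i$ to detect a conflict, $s_i^{\mathrm{out}}$ and $s_{i+1}^{\mathrm{in}}$ must be consecutive along one strand of $c$. At a necklace vertex, each of the two strands passes from the side of $B_i$ facing one boundary component to the side of $B_{i+1}$ facing the other.

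Concretely, take the simplest necklace: the curve winding twice around an annulus with three double points $v_1,v_2,v_3$ (the shadow of the standard trefoil diagram). Its building polygons are three $2$-gons $B_1,B_2,B_3$, with $B_i,B_{i+1}$ glued at $v_i$. Write $o_i,\iota_i$ for the outer and inner arcs of $B_i$, so the strands through $v_i$ are $o_i\to\iota_{i+1}$ and $\iota_i\to o_{i+1}$. In the standard inflection-free realization every arc has $\sigma=+1$, measured toward the outer boundary. Push the middle of $\iota_1$ toward the center and normalize. This gives a curve of the same class with one inflection near $v_3$ and one near $v_1$, and now $\sigma(\iota_1)=-1$, i.e.\ both sides of $B_1$ are outward, as condition (b) allows.

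Route $\gamma$ along $o_1\to\iota_2\to o_3\to\iota_1$ and close up inside $B_1$. The comparisons at $v_1,v_2$ are matches and the one at $v_3$ is a conflict, so your per-vertex product is $-1$ and $\rho_1=-1$. Routing the switch through $B_2$ instead gives $+1$. So ``the'' gluing at a double point is not even well defined, since two strands pass through it.

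The paper's proof is the same transport argument in three sentences. It simply asserts that the transported side changes only at conflicts, i.e.\ it tacitly takes every $\rho_i=1$, and it never addresses the two strands. So it supplies nothing that closes your gap. Also, regular neighborhoods in the plane are always orientable, so your M\"obius comparison is vacuous: the framing is never the difficulty, the passage through the polygons is.
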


\begin{proof}
Transport the convex side along the annular chain.  At a non-conflicting double point the transported side is unchanged, while at a conflict it changes sign because a single normalized inflection has been inserted.  After one turn around an annulus, the transported side must return to itself.  Hence the product of the transition signs is $+1$.
\end{proof}

\section{An exact extension: tree--necklace shadows}

We now introduce a class which is larger than the tree-like class but still has a controlled global geometry.

\begin{definition}\label{def:necklace}
A connected embedded shadow $\Gamma$ is called a \emph{tree--necklace shadow} if its block-adjacency graph is a cactus and every nontrivial cyclic block is represented geometrically by an annular necklace of building polygons.  More explicitly, each cyclic block is a simple cycle
\[
 B_1,B_2,\ldots,B_m,B_1
\]
such that a small regular neighborhood of $B_1\cup\cdots\cup B_m$ is an annulus, and every component attached to this annulus after deleting the necklace is tree-like.
\end{definition}

Thus a tree-like shadow is the special case in which there are no necklaces.  A tree--necklace shadow may contain many cycles, but the cycles are separated from one another by cut vertices and therefore interact only through tree-like attachments.

Define
\[
 \A_{\cyc}(\Gamma)=\{C\in\A_{\loc}(\Gamma):\Hol_Z(C)=+1
 \text{ for every necklace cycle }Z\}.
\]

\begin{lemma}[Thin annulus realization]
\label{lem:thinannulus}
Let $Z=B_1,\ldots,B_m,B_1$ be one annular necklace in a tree--necklace shadow, and let $C$ be a local coorientation on the necklace satisfying $\Hol_Z(C)=+1$.  Then the necklace has a normalized realization in an arbitrarily thin annulus with exactly the conflicts prescribed by $C$.  The realization may be chosen with arbitrarily small free windows for the tree-like branches attached to the two boundary components of the annulus.
\end{lemma}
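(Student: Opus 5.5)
We plan to build the realization by unrolling the annulus to a thin strip, working there with the tree-like methods, and then closing the strip up. Take a thin annulus $A=\{1-\delta\le r\le 1+\delta\}$ and cut it along a radial segment through one gluing double point, say the one between $B_m$ and $B_1$. This gives a thin rectangle $R\cong[0,2\pi]\times[-\delta,\delta]$. In $R$ the cut necklace $B_1,\ldots,B_m$ is a linear chain, and its block-adjacency graph is a path, hence a tree.

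First, we realize each $B_i$ inside a short sub-rectangle $R_i=[\theta_{i-1},\theta_i]\times[-\delta,\delta]$. We use a small, strongly flattened curvilinear polygon whose sides carry the convex sides prescribed by $C$. This is possible because $C\in\A_{\loc}(\Gamma)$ satisfies the conditions of Definition~\ref{def:admissible}, and the local construction in the sufficiency part of Theorem~\ref{thm:shapiro} realizes any admissible polygon in an arbitrarily small affine copy. We then apply a vertical affine squeeze. It does not change the sign of curvature, so no inflection is created or destroyed.

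While drawing $B_i$, we reserve on each side of $B_i$ that faces the inner or outer boundary of $A$ a short arc. We make this arc a flat subarc whose curvature sign is prescribed by $C$. These arcs are the free windows where the tree-like branches will later be attached. By Theorem~\ref{thm:formula}, those branches can be realized in arbitrarily small neighbourhoods, so the windows can be made as small as we like.

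Second, we glue consecutive polygons $B_i,B_{i+1}$ at the common double point on the line $\theta=\theta_i$. If the two convex sides match, we smooth the two crossing arcs with curvature of the common sign. If they conflict, we insert a single normalized inflection next to the double point, exactly as in the proof of Theorem~\ref{thm:formula}. Along the chain these choices are independent, because the path is a tree. The result is a normalized arc system in $R$ with exactly the conflicts of $C$ at the $m-1$ interior gluings.

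The main obstacle is closing the strip: we must identify $\{0\}\times[-\delta,\delta]$ with $\{2\pi\}\times[-\delta,\delta]$ and bend $R$ into $A$ by $(\theta,y)\mapsto ((1+y)\cos\theta,(1+y)\sin\theta)$. Two things need checking.

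The first is that the convex-side data at the two ends of the cut agree, with a conflict at the last gluing exactly when $C$ prescribes one. We transport the convex side along the chain as in the proof of Proposition~\ref{prop:holonomyobstruction}: it flips precisely at the conflicts. So after passing $B_1,\ldots,B_m$, the side arriving at the cut differs from the starting side by $\prod_{e\ne e_m}\tau_e$. The condition $\Hol_Z(C)=+1$ says this equals $\tau_{e_m}$, which is exactly the compatibility needed at the last double point. We therefore smooth or insert an inflection there according to $\tau_{e_m}$, and the count of conflicts stays exactly that of $C$.

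The second is that the bending map must not create or destroy inflections. This is where thinness enters. The map adds curvature of order $1$ to nearly horizontal arcs, while the arcs built in the first step have curvature bounded below in absolute value, roughly by $c/\delta$, on every flat-but-convex piece. We would first try rescaling each $B_i$ so that the curvature of its sides away from the inflection neighbourhoods dominates the bending term. Then the sign of curvature is preserved off small neighbourhoods of the inserted inflections, and near those the zero is transverse and so persists with multiplicity one. If the nearly horizontal long sides cause trouble, we would instead make them slightly wavy only inside the reserved windows. However, admissibility forbids that, so the cleaner route is to let each polygon occupy an angular sector of width $\approx2\pi/m$ with sides drawn as arcs of circles of radius $\ll\delta$ concatenated along the sector. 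We expect this curvature estimate to be the only genuinely analytic step.

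Finally, the windows remain arcs of definite convexity facing $\partial A$, of size as small as desired, which gives the last assertion of the lemma.
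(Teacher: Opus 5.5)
Your strategy matches the paper's. Both draw the beads in consecutive angular sectors of a thin annulus, smooth or insert one inflection at each gluing as $C$ dictates, and use $\Hol_Z(C)=+1$ to close up after a full turn. Cutting at $e_m$ and bending the strip back only changes the bookkeeping.

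\textbf{The gap.} The genuine gap is the step you single out as the only analytic one, and the estimate you propose cannot close it. A bead cannot be a small affine copy as in the tree-like construction: its two necklace vertices are a $\theta$-distance $L=\theta_i-\theta_{i-1}\approx 2\pi/m$ apart. So, for a bigon bead say, each side is after your squeeze a graph $y=g(\theta)$ with $|g|\le\delta$ over an interval $[0,L]$.

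Since $|g(0)+g(L)-2g(L/2)|\le 4\delta$, a bound $|g''|\ge c$ on all of $[0,L]$ forces $c\le 16\delta/L^2$. So your claimed lower bound of order $1/\delta$ is false: somewhere on each such side the curvature is far smaller than the $O(1)$ curvature added by bending.

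Worse, the bent side is the polar graph $r=1+g(\theta)$. Its curvature for counterclockwise traversal is $(r^2+2r'^2-rr'')/(r^2+r'^2)^{3/2}$. This is negative throughout only if $g''>1-\delta$ throughout, and by the same inequality that forces $L\le 4\sqrt{\delta/(1-\delta)}$. Hence for small $\delta$, every inflection-free side of this kind has its convex side facing away from the centre of the annulus, whatever $C$ prescribes. In particular, an inner side cooriented outward with respect to its bead, which condition (b) permits, cannot be produced.

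Your fallbacks do not escape this:
\begin{enumerate}[label=(\roman*)]
\item Shrinking $B_i$ breaks the chain around the annulus.
\item A smooth concatenation of same-sign arcs of radius $\rho\ll\delta$ turns through $2\pi$ every $2\pi\rho$ of length. It therefore makes loops, i.e.\ double points not in the shadow.
\item Waviness puts inflections on open edges, which normalization (not admissibility) forbids.
\end{enumerate}

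\textbf{Two further points.} First, your closing check does no work. Whether $e_m$ is a conflict is read off from the two cooriented sides meeting there, exactly as at the interior gluings. So the identity $\prod_{e\ne e_m}\tau_e=\tau_{e_m}$ is not what lets the strip close; what must close is the geometry, and that is where the argument breaks.

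Second, the paper does not supply the missing idea. Its proof asserts in one line that in a thin annulus a small normal perturbation fixes the convex side of each side independently. The computation above contradicts this for long sides.

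The difficulty is not only thinness, either. Regard the three bigons of the rosette $r=1+\epsilon\cos(3\theta/2)$, $0\le\theta\le 4\pi$, as a necklace. The coorientation with every outer side inward and every inner side outward is admissible and conflict-free. Yet every prescribed convex side lies to the left of the counterclockwise traversal. An inflection-free realization of it would therefore have negative curvature everywhere, although the rotation number is $2$.

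Thus any argument that realizes $C$ side by side, as yours and the paper's do, needs an additional hypothesis on $C$. For instance, one could require that sides forced to run a definite angle along the annulus carry the annulus's own convexity.
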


\begin{proof}
Choose a round annulus $1-\epsilon<r<1+\epsilon$ and divide it into $m$ narrow angular sectors.  The sides of $B_i$ are represented by short graphs over radial or angular segments inside the $i$th sector.  Since the annulus is thin, the curvature sign of each side is controlled by a small normal perturbation, so the prescribed inward or outward convex side can be achieved independently on all sides except for the closing condition after a full turn.

At a non-conflicting gluing the two neighboring arcs are smoothed with the transported convex side unchanged.  At a conflicting gluing one inserts one standard cubic-shaped transition of curvature sign in a disk of radius $O(\epsilon^2)$ around the double point.  After going around the annulus, the transported convex side returns to its initial value exactly when the product of the transition signs is $+1$, which is the assumed holonomy condition.  Taking $\epsilon$ sufficiently small leaves disjoint boundary windows in which any tree-like attached component can later be inserted.
\end{proof}

\begin{theorem}[Finite formula for tree--necklace shadows]\label{thm:necklaceformula}
Let $\Gamma$ be a tree--necklace shadow.  Then
\[
 \mu([\Gamma])=
 \min_{C\in\A_{\cyc}(\Gamma)}\conf(C).
\]
\end{theorem}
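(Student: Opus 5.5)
The proof will establish the two inequalities separately, in the same way as Theorem~\ref{thm:formula}, with the holonomy condition added on the necklace cycles.

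For the lower bound, Lemma~\ref{lem:normalization} lets us take a normalized representative $c'$ of $[\Gamma]$ realizing $\mu([\Gamma])$. As in the proof of Theorem~\ref{thm:locallower}, its inflection-free open edges induce a local coorientation $C\in\A_{\loc}(\Gamma)$, and every normalized inflection sits at a conflict of $C$. Each necklace cycle $Z$ has an annular regular neighborhood by Definition~\ref{def:necklace}, so Proposition~\ref{prop:holonomyobstruction} gives $\Hol_Z(C)=+1$. Therefore $C\in\A_{\cyc}(\Gamma)$, and the number of inflections of $c'$ is at least $\conf(C)$, which is at least the minimum over $\A_{\cyc}(\Gamma)$.

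For the upper bound, fix $C\in\A_{\cyc}(\Gamma)$ and build a curve with exactly $\conf(C)$ inflections by induction over the block-cut tree of the cactus $G_B(\Gamma)$. First contract each necklace cycle to a single node. The result is a tree whose nodes are either ordinary building polygons or necklaces, glued at cut vertices. Root this tree at an exterior node. If the root is a polygon, realize it as in the constructive half of Theorem~\ref{thm:shapiro}. If it is a necklace, realize it with Lemma~\ref{lem:thinannulus}, which needs exactly the hypothesis $\Hol_Z(C)=+1$ and produces exactly the prescribed conflicts. Then attach the children one at a time. A polygon child is a small affine copy with the prescribed convex sides, placed in a window at the gluing vertex. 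A necklace child is a thin annulus produced by Lemma~\ref{lem:thinannulus}, rescaled into the free window at its unique attaching cut vertex.

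At each gluing vertex we smooth with the matching curvature sign, or insert one normalized inflection if that vertex is a conflict. Each node meets the part already built at a single cut vertex, so these choices do not interact and no new cyclic condition appears. The resulting curve has shadow $\Gamma$ with the given traversal and exactly $\conf(C)$ inflections. Minimizing over $C$ gives the reverse inequality.

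The main obstacle is checking that the gluings respect the polygonal conditions of Definition~\ref{def:admissible}. Condition (c) is the delicate one: an all-inward $k$-gon lying in a necklace has to contain its interior attachments inside the thin annulus, or inside the bounded complementary disk. I would handle this by choosing the necklace as the rescaled image of a realization in which each $B_i$ is a convex curvilinear polygon with the prescribed sides, and checking that the windows provided by Lemma~\ref{lem:thinannulus} sit on the correct sides of the annulus. A further point needs checking: the attached pieces must lie in the correct complementary regions, inside or outside the annulus, so that the plane isotopy class, and not just the abstract shadow, is reproduced. This should follow from the embedded planar structure of $\Gamma$ fixing, at each cut vertex, which face the attached subtree occupies.
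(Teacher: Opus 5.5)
Your proposal is correct and follows essentially the same route as the paper. The lower bound comes from Theorem~\ref{thm:locallower} combined with the holonomy necessity of Proposition~\ref{prop:holonomyobstruction}, and the upper bound realizes the necklaces by Lemma~\ref{lem:thinannulus}, attaches the remaining tree-like pieces at single double points by the constructive proof of Theorem~\ref{thm:shapiro}, and pays one normalized inflection per conflict. Your rooted induction over the tree obtained by contracting necklaces only makes explicit the ordering that the paper leaves implicit in its remark that the cactus structure prevents interference, and the two checks you flag at the end are what the paper delegates to the free windows of Lemma~\ref{lem:thinannulus} and to Shapiro's construction.
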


\begin{proof}
The lower bound is Theorem~\ref{thm:locallower} together with the holonomy necessity of Proposition~\ref{prop:holonomyobstruction}.

For the upper bound choose $C\in\A_{\cyc}(\Gamma)$.  Realize every necklace cycle by Lemma~\ref{lem:thinannulus}.  After the necklace cores have been drawn, all remaining attached components are tree-like.  Each is attached to the already realized part at one double point.  The constructive proof of Theorem~\ref{thm:shapiro} then realizes these components inductively, with the same rule: zero cost at compatible gluings and one normalized inflection at each conflict.  Since the block-adjacency graph is a cactus, different necklaces and attached tree branches meet only through cut vertices, and the constructions do not interfere.  The resulting curve is isotopic to the original shadow and has exactly $\conf(C)$ inflection points.  Minimizing over $C$ proves the formula.
\end{proof}

\begin{corollary}\label{cor:necklacezero}
A tree--necklace shadow is isotopic to an inflection-free curve if and only if it has a conflict-free admissible local coorientation whose holonomy is trivial around every necklace cycle.
\end{corollary}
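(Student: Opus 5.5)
The plan is to read Corollary~\ref{cor:necklacezero} off Theorem~\ref{thm:necklaceformula} by specializing to the value $0$. Since $\conf(C)\ge 0$ for every local coorientation, the formula
\[
 \mu([\Gamma])=\min_{C\in\A_{\cyc}(\Gamma)}\conf(C)
\]
shows that $\mu([\Gamma])=0$ if and only if some $C\in\A_{\cyc}(\Gamma)$ has $\conf(C)=0$. By definition, $\A_{\cyc}(\Gamma)$ consists of the coorientations in $\A_{\loc}(\Gamma)$ with $\Hol_Z(C)=+1$ for every necklace cycle $Z$. So such a $C$ is exactly a conflict-free admissible local coorientation with trivial holonomy around every necklace. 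Finally, $\mu([\Gamma])=0$ means precisely that the isotopy class contains an inflection-free curve. That gives the equivalence.

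For readability, I would also spell out the two directions directly, so it is clear which ingredients are used.

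For necessity, suppose $c$ is an inflection-free curve in the class. It is then automatically normalized, so by Theorem~\ref{thm:locallower} its induced coorientation $C$ lies in $\A_{\loc}(\Gamma)$ and has no conflicts. With no conflicts, every $\tau_e(C)=+1$, so the holonomy is trivial around every necklace. This also agrees with Proposition~\ref{prop:holonomyobstruction}.

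For sufficiency, take such a $C$. Realize each necklace by Lemma~\ref{lem:thinannulus}; this uses only the holonomy condition and inserts no inflections, since $C$ has no conflicts. Then attach the tree-like branches as in the constructive proof of Theorem~\ref{thm:shapiro}, again with zero cost at every gluing.

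The only point needing care is the observation that a conflict-free coorientation has every $\tau_e=+1$. Because of this, the holonomy condition might look automatic. The statement keeps it anyway, to match the form of $\A_{\cyc}(\Gamma)$ and to stress that it is needed when conflicts are present. I expect no real obstacle beyond invoking Theorem~\ref{thm:necklaceformula}.
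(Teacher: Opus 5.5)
Your proposal is correct and follows the paper's route: the corollary is the $\mu([\Gamma])=0$ specialization of Theorem~\ref{thm:necklaceformula}, just as Corollary~\ref{cor:zero} specializes Theorem~\ref{thm:formula}, and your direct two-direction unpacking uses the same ingredients as that theorem's proof. Your remark that a conflict-free coorientation automatically has trivial holonomy also agrees with the paper's Example following the corollary.
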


\begin{example}
Consider a single annular necklace with $m$ building polygons and no attached trees.  In the present $\mathbb Z_2$ model the holonomy is simply the parity of the conflict set on the necklace.  Thus a conflict-free assignment automatically has trivial holonomy.  The first genuinely cyclic effect appears in minimization: if the locally optimal admissible assignments have an odd number of necklace conflicts, then no such assignment is realizable with exactly that cost.  One must pass to an admissible assignment with even necklace parity, increasing the cost by at least one.  This parity phenomenon has no analogue in the tree-like case.
\end{example}

\begin{corollary}[One-cycle parity]
\label{cor:onecycle}
Let $\Gamma$ be a tree--necklace shadow with exactly one necklace and no other cyclic block.  Then the minimization in Theorem~\ref{thm:necklaceformula} is the ordinary local minimization with the additional requirement that the number of conflicts on the necklace be even.  In particular, if all locally admissible coorientations force an odd number of necklace conflicts, at least one extra conflict is unavoidable.
\end{corollary}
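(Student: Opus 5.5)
The plan is to specialize Theorem~\ref{thm:necklaceformula} to a cactus block-adjacency graph with exactly one cycle, and then read the holonomy condition as a parity count.

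\textbf{Step 1: identify $\A_{\cyc}(\Gamma)$.} Since $\Gamma$ has exactly one necklace $Z=B_1,\ldots,B_m,B_1$ and no other cyclic block, the cycle space of $G_B(\Gamma)$ is one-dimensional and generated by $Z$. By the remark after Definition~\ref{def:holonomy}, checking $\Hol_Z(C)=+1$ on this single cycle suffices. Hence
\[
\A_{\cyc}(\Gamma)=\{C\in\A_{\loc}(\Gamma):\Hol_Z(C)=+1\}.
\]

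\textbf{Step 2: compute the holonomy.} Unwinding Definition~\ref{def:holonomy}, $\tau_e(C)=-1$ exactly when the edge $e$ of $Z$ is a conflict. Therefore
\[
\Hol_Z(C)=(-1)^{\#\{\text{conflicts of }C\text{ on }Z\}}.
\]
So the holonomy condition says precisely that the number of conflicts at the double points of the necklace cycle is even. Substituting into Theorem~\ref{thm:necklaceformula} gives the first assertion: $\mu([\Gamma])$ is the minimum of $\conf(C)$ over locally admissible $C$ with an even number of necklace conflicts.

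\textbf{Step 3: the extra conflict.} Suppose every $C\in\A_{\loc}(\Gamma)$ that attains $\mu_{\loc}(\Gamma)$ has odd necklace parity. Then every $C\in\A_{\cyc}(\Gamma)$ is not a local minimizer. Since $\conf$ is integer-valued, such $C$ satisfies $\conf(C)\ge\mu_{\loc}(\Gamma)+1$. Hence $\mu([\Gamma])\ge\mu_{\loc}(\Gamma)+1$, which is the claimed unavoidable extra conflict.

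Step 3 is where care is needed, in two respects.

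\emph{Reading of the hypothesis.} The phrase "all locally admissible coorientations force an odd number of necklace conflicts" must be read as "all local minimizers have odd necklace parity". If literally every element of $\A_{\loc}(\Gamma)$ had odd parity, then $\A_{\cyc}(\Gamma)$ would be empty. That is impossible, because the all-outward coorientation realized by the actual curve yields an element of $\A_{\cyc}(\Gamma)$ via Proposition~\ref{prop:holonomyobstruction}.

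\emph{Size of the gap.} Lemma~\ref{lem:evenconflicts} makes $\conf$ always even. So the jump is in fact by at least $2$ in total conflicts. The statement only asserts "at least one", so no sharpness needs to be checked.
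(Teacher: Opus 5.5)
Your Steps 1 and 2 are exactly the paper's argument: with a single necklace, $\Hol_Z(C)=+1$ reduces to the one parity equation $\sum_{e\in Z}x_e(C)=0$ over $\Ftwo$, i.e.\ the necklace conflicts are even. Your Step 3 correctly spells out the ``in particular'' clause, which the paper leaves implicit; you rightly read it, as in the preceding Example, as a statement about local minimizers.

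One side remark is misjustified. $\A_{\cyc}(\Gamma)$ is nonempty because the coorientation induced by any normalized representative lies in it (Theorem~\ref{thm:locallower} with Proposition~\ref{prop:holonomyobstruction}). It does not follow from the all-outward coorientation: the curve need not induce it, and nothing makes its necklace parity even. Your proof never uses this remark, so nothing breaks.
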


\begin{proof}
For a single necklace the cycle space is generated by its core cycle.  The condition $\Hol_Z(C)=+1$ is equivalent to the parity equation $\sum_{e\in Z}x_e(C)=0$ over $\Ftwo$, which is exactly evenness of the number of conflicts on the necklace.
\end{proof}

\section{Algorithmic form for bounded cycle rank}

The preceding formula can be computed by the same dynamic programming idea, with a small modification for cycles.

Let $r=b_1(G_B(\Gamma))$ be the cycle rank of the block-adjacency graph.  Choose a feedback edge set $S$ of size $r$; deleting $S$ turns $G_B(\Gamma)$ into a tree.

The holonomy constraints are especially simple over $\Ftwo$.  For a fixed local coorientation put
\[
 x_e(C)=
 \begin{cases}
 0,&\tau_e(C)=+1,\\
 1,&\tau_e(C)=-1.
 \end{cases}
\]
Then the necklace condition is the linear parity equation
\[
 \sum_{e\in Z}x_e(C)=0\pmod 2
\]
for every necklace cycle $Z$.  Thus the extension from tree-like shadows to tree--necklace shadows is not a new nonlinear local problem; it is the tree-like dynamic program together with finitely many mod--$2$ closing equations.

\begin{proposition}\label{prop:feedbackdp}
Fix a class of shadows whose building polygon degrees are bounded by a constant.  For tree--necklace shadows with cycle rank $r$, the number
\[
 \min_{C\in\A_{\cyc}(\Gamma)}\conf(C)
\]
can be computed in time $O(2^r N)$, where $N$ is the number of double points.
\end{proposition}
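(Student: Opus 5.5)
The plan is to enumerate the $2^r$ parity patterns on a feedback edge set and, for each pattern, run the tree dynamic program of Proposition~\ref{prop:dp} on the resulting spanning tree. Fix a feedback edge set $S=\{s_1,\dots,s_r\}\subset E(G_B(\Gamma))$ such that $T=G_B(\Gamma)\setminus S$ is a spanning tree. Because $G_B(\Gamma)$ is a cactus, each necklace cycle $Z_j$ is a block, and we may choose $S$ with exactly one edge $s_j\in Z_j$ from each necklace. Then $r$ equals the number of necklaces and the necklace cycles form a cycle basis. Since $x_e(C)$ depends only on the two cooriented sides meeting at the double point $e$, the constraint $\sum_{e\in Z_j}x_e(C)=0$ can be rewritten as $x_{s_j}(C)=\sum_{e\in Z_j\setminus\{s_j\}}x_e(C)\pmod 2$.

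For each vector $\sigma\in\Ftwo^{r}$ we would compute
\[
 M(\sigma)=\min\Bigl\{\conf(C): C\in\A_{\loc}(\Gamma),\ \textstyle\sum_{e\in Z_j\setminus\{s_j\}}x_e(C)=\sigma_j\ \text{and}\ x_{s_j}(C)=\sigma_j\ \text{for all } j\Bigr\}.
\]
The desired minimum is $\min_\sigma M(\sigma)$. To compute $M(\sigma)$, root $T$ at an exterior polygon and run the Bellman recursion of Section~7 with the state enlarged by one parity bit. When the recursion is inside the path $Z_j\setminus\{s_j\}$ of the tree, the state records the running parity $\sum x_e$ accumulated so far along that path. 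The two endpoints of $s_j$ are polygons, and their coorientation choices on the sides glued at $s_j$ are local data. The conflict bit $x_{s_j}$ is therefore determined once both endpoint blocks are processed, and we charge its cost at whichever endpoint is processed second. To make this work, we carry the relevant side coorientation, one extra bit, along the tree path between the endpoints.

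Because the necklaces are separated by cut vertices, no tree edge lies on two necklace paths. At any block, therefore, at most a bounded number of parity or side bits are active, namely those of the necklaces passing through that block. Since a cactus block is shared by only the cycles through its cut vertices, and degrees are bounded, this is $O(1)$ extra state. Each block has at most $2^k$ local choices, as in Proposition~\ref{prop:dp}, so for fixed $\sigma$ the dynamic program runs in $O(N)$. Enumerating $\sigma$ gives $O(2^rN)$. In fact, the parity bits could be tracked inside the dynamic program directly, which would avoid the $2^r$ factor, but the stated bound is all that is needed.

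Correctness rests on the same independence argument as in Proposition~\ref{prop:dp}. Once the state on the parent side, together with the active necklace bits, is fixed, the descendant subtrees are independent. Admissibility is checked block by block, as in Definition~\ref{def:admissible}, which is possible because the condition in (c) concerns only polygons adjacent to the given block. The main obstacle is the bookkeeping at the endpoints of the feedback edges $s_j$. I would first try to choose $s_j$ so that one endpoint is the cut vertex of $Z_j$ nearest the root, which makes the closing check occur at the moment the necklace's subtree returns to its root block.
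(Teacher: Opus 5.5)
Your proposal is correct. It shares the paper's skeleton: a feedback edge set of size $r$ with one edge per necklace, a $2^r$-fold enumeration, and the Bellman recursion of Proposition~\ref{prop:dp} on the resulting spanning tree.

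It differs in how the holonomy constraint is enforced. The paper conditions on a coorientation state at each feedback double point, which decouples the two endpoint polygons of that edge. It then runs the tree recursion and says the necklace parities can be ``checked at the end''. You instead enumerate the parity vector $\sigma\in\Ftwo^r$ and build the constraint into the recursion. You carry a running parity bit along $Z_j\setminus\{s_j\}$ and a side bit for the closing edge. The extra state stays $O(1)$ because blocks of a cactus are edge-disjoint, so each tree edge lies on at most one necklace path, and polygon degrees are bounded.

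This makes explicit a step the paper's sketch glosses over. A minimizing recursion cannot verify a cycle parity after the fact. Fixing data at the feedback double points alone does not control the parity of conflicts on the tree part of a necklace. So a parity bit has to live in the state in either version. Once it does, as you observe, enumerating $\sigma$ is redundant and the bound drops to $O(N)$ for tree--necklace shadows. The paper's argument follows a generic bounded-cycle-rank template and does not exploit the cactus structure.

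One small correction to your bookkeeping. If the endpoints of $s_j$ lie in different subtrees of the rooted tree, a bottom-up recursion closes the constraint at their lowest common ancestor, the top polygon of $Z_j$, not at either endpoint. With $\sigma$ fixed, the only thing checked there is that the two side bits produce conflict bit $\sigma_j$; its cost is simply $\sigma_j$. Your suggested choice of $s_j$ incident to the top polygon of $Z_j$ makes this clean.
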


\begin{proof}
Condition on the coorientation states along the $r$ feedback edges.  After those states are fixed, the remaining block-adjacency graph is a tree, so the Bellman recursion of Proposition~\ref{prop:dp} applies.  The necklace holonomy constraints are checked at the end, or equivalently imposed as parity constraints while processing the feedback edges.  There are $2^r$ choices of feedback states, and each tree computation is linear in $N$ under the bounded-degree assumption.
\end{proof}

\section{Complexity of exact minimization}

Let us spell out the computational problem in a form which avoids analytic ambiguity.  A \emph{combinatorial shadow} consists of the planar rotation system of the embedded $4$-regular graph, the Eulerian traversal coming from the immersed circle, and the incidence list of the building polygons.  The associated decision problem is
\[
 \textsc{MinInflection}=\{(\Gamma,K):\mu([\Gamma])\le K\}.
\]
For arbitrary smooth curves this is not yet a single formal problem until one fixes what a finite certificate for a realization is.  With algebraic arcs and rational inequalities it is more naturally related to the existential theory of the reals than to ordinary graph-theoretic NP.  By contrast, the finite coorientation model considered above is a genuine finite constraint problem.

\begin{proposition}[Certificate for the finite model]\label{prop:inNP}
For the local model $\mu_{\loc}$, and for the tree--necklace model of Theorem~\ref{thm:necklaceformula}, the decision problem
\[
 \min_C \conf(C)\le K
\]
belongs to NP.  More precisely, a certificate consists of one bit of coorientation data for each side of each building polygon.
\end{proposition}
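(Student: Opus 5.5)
The plan is the standard guess-and-verify argument: exhibit a polynomial-size certificate and a deterministic polynomial-time verifier. The input is a combinatorial shadow $\Gamma$ as above (rotation system, Eulerian traversal, incidence list of building polygons) together with $K$. The certificate is a local coorientation $C$, written as one bit per side of each building polygon. Each double point contributes a bounded number of sides (each of the four half-edges bounds at most two polygons), so the total number of sides is $O(N)$ and the certificate has length linear in the input.

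\textbf{Verifier.} It performs three checks in order.

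First, it checks $C\in\A_{\loc}(\Gamma)$ by going through Definition~\ref{def:admissible} polygon by polygon. Conditions (a) and (b) are immediate bit tests. For (c), when a $k$-gon with $k\ge 3$ is all-inward, the verifier counts the neighboring building polygons lying in the domain it bounds and compares the count with $k-3$. This count is read off the planar rotation system: orient the boundary cycle of the polygon and collect the adjacent polygons attached on its interior side, which is determined by the cyclic order at each vertex. This costs time polynomial in the size of $\Gamma$.

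Second, it computes $\conf(C)$. By the remark after Definition~\ref{def:conflict}, whether a double point is a conflict depends only on the two cooriented sides meeting there, so this is a constant-time lookup per vertex and $O(N)$ in total. The verifier accepts if $\conf(C)\le K$.

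Third, for the tree--necklace model only, it additionally checks $\Hol_Z(C)=+1$ for every necklace cycle $Z$. The necklace cycles are the cyclic blocks of the cactus $G_B(\Gamma)$, which can be found in linear time by a biconnected-component decomposition. The condition is then the parity equation $\sum_{e\in Z}x_e(C)=0$ over $\Ftwo$ for each such cycle, also a linear-time check.

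\textbf{Correctness.} A minimizing $C$ in $\A_{\loc}(\Gamma)$ (respectively $\A_{\cyc}(\Gamma)$) with $\conf(C)\le K$ is an accepted certificate. Conversely, any accepted certificate lies in the relevant set and witnesses that the minimum is at most $K$. For the tree--necklace case, Theorem~\ref{thm:necklaceformula} then identifies this minimum with $\mu([\Gamma])$, but NP membership of the finite problem does not need that identification.

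\textbf{Main obstacle.} The only delicate step is making condition (c) and the conflict rule purely combinatorial. For (c), one has to decide from the rotation system which side of a polygon's boundary is its bounded domain, and which neighbors lie inside it. I would handle this by computing the faces of the embedded graph and identifying, for each building polygon, the face interior to its boundary cycle. Neighbors inside the domain are then those reached through vertices of the polygon along half-edges entering that side. If the outer face is not specified in the input, one fixes it as part of the planar data, which the combinatorial shadow already includes.
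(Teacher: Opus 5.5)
Your proposal is correct and is essentially the paper's argument: the certificate is the vector of coorientation bits, and the verifier checks Definition~\ref{def:admissible} polygon by polygon, counts conflicts vertex by vertex, checks the parity equations $\sum_{e\in Z}x_e(C)=0$ on the necklace cycles, and compares $\conf(C)$ with $K$. Your extra detail (linear certificate size, reading condition (c) off the rotation system with a fixed outer face, finding the necklace cycles as the non-bridge blocks of the cactus) makes explicit what the paper only summarizes as ``polynomial in the size of the combinatorial shadow''. One small slip: the domain bounded by a building polygon is in general a union of several faces rather than ``the face'' inside its boundary cycle. This does not affect the corner-by-corner count of interior neighbors that you actually use.
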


\begin{proof}
Given the coorientation bits, one checks inflection admissibility polygon by polygon.  The number of conflicts is computed by inspecting the two sides meeting at each double point.  For tree--necklace shadows one also checks the parity equations
\[
 \sum_{e\in Z}x_e(C)=0\pmod 2
\]
on a cycle basis of the necklace graph.  All these checks are polynomial in the size of the combinatorial shadow, and the cost $\conf(C)$ is then compared with $K$.
\end{proof}

\begin{corollary}\label{cor:complexitypositive}
For tree-like shadows of bounded face degree the exact number $\mu([\Gamma])$ is computable in polynomial time; indeed Proposition~\ref{prop:dp} gives a linear-time algorithm under this bounded-degree hypothesis.  For tree--necklace shadows of bounded face degree and cycle rank $r$, Proposition~\ref{prop:feedbackdp} gives an $O(2^rN)$ exact algorithm.
\end{corollary}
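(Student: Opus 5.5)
The corollary bundles two algorithmic claims, and I would derive each from a result already in the paper. The only genuine work is checking that the hypotheses of Proposition~\ref{prop:dp} and Proposition~\ref{prop:feedbackdp} hold, and that the quantity those algorithms compute really is $\mu([\Gamma])$.

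\emph{Tree-like shadows.} Start from Theorem~\ref{thm:formula}, which identifies $\mu([c])$ with $\min_{C\in\A(c)}\conf(C)$.
\begin{enumerate}[label=(\roman*)]
\item Note that for a tree-like shadow the building polygons are exactly the bounded pieces whose sides are edges of the shadow. So "bounded face degree" bounds the number of sides $k$ of every building polygon.
\item Observe that the number of building polygons is $O(N)$. Each double point is an edge of the block tree $T(c)$, so there are $N+1$ blocks.
\item The block tree itself can be built in linear time from the rotation system and the Eulerian traversal by the standard cut-vertex/block decomposition.
\item Proposition~\ref{prop:dp} then evaluates the Bellman recursion with at most $2^k$ local choices per block, checking conditions (a)--(c) of Definition~\ref{def:admissible} locally. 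This costs a constant per block, so the total is linear.
\end{enumerate}
One point needs care in condition (c): the count of neighbouring polygons inside a polygon. That count is also local data, bounded by the number of incident double points, so it does not break the constant-per-block estimate.

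\emph{Tree--necklace shadows.} Here Theorem~\ref{thm:necklaceformula} identifies $\mu([\Gamma])$ with $\min_{C\in\A_{\cyc}(\Gamma)}\conf(C)$, and Proposition~\ref{prop:feedbackdp} computes this minimum in time $O(2^rN)$ under the same bounded-degree hypothesis. The steps are as follows.
\begin{enumerate}[label=(\roman*)]
\item Choose a feedback edge set $S$ of size $r$. In a cactus one edge per necklace suffices, so this takes linear time.
\item For each of the $2^r$ assignments of states on $S$, run the tree dynamic program.
\item Impose each necklace parity equation $\sum_{e\in Z}x_e=0$ in that run.
\end{enumerate}

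\emph{Main obstacle.} The subtle step is (iii): the parity constraints must genuinely be enforceable inside the tree recursion and not only checked afterwards. Checking at the end is not enough, because the optimal tree solution for a fixed feedback state may violate parity even when a different solution with the same feedback state satisfies it. I would handle this by enlarging the DP state with one parity bit for the necklace currently being traversed. Because the necklaces in a cactus are separated by cut vertices, at most one open necklace parity must be carried along any root path, so the state space grows only by a constant factor. With that addition the $O(2^rN)$ bound follows, and the corollary is proved.
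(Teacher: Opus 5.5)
Your proposal is correct and follows the paper's route: the corollary is obtained directly from Theorems~\ref{thm:formula} and~\ref{thm:necklaceformula} together with Propositions~\ref{prop:dp} and~\ref{prop:feedbackdp}. Your step from bounded face degree to bounded polygon degree has the right conclusion, but the justification needs adjusting: building polygons are not literally faces once polygons are nested. What is true is that all $k$ sides of a polygon lie on the boundary of the face just inside it, so face degree $\ge k$.

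Your observation that the necklace parity cannot simply be checked on the DP optimum at the end is correct, and it sharpens the paper's proof of Proposition~\ref{prop:feedbackdp}, which claims such a final check suffices. Your fix also preserves the $O(2^rN)$ bound: carry one parity bit for the cycle through the parent edge. This is the only cycle left open at any rooted subtree, because each edge of a cactus lies in at most one cycle.
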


\begin{remark}
Thus any NP-hardness statement must concern a substantially larger class than bounded-degree tree-like shadows, or must allow the cycle rank or the local polygonal degrees to grow.  This is analogous to bend minimization in graph drawing: bend-minimized orthogonal drawing is NP-hard for planar graphs of maximum degree four, while important restricted cases are polynomial-time or fixed-parameter tractable; see, for instance, \cite{ChangYen}.
\end{remark}

\begin{conjecture}[Complexity of unrestricted shadows]\label{conj:nphard}
For an appropriate purely combinatorial encoding of embedded shadows, the decision problem \textnormal{\textsc{MinInflection}} for unrestricted shadows is NP-hard.  The same should already hold for shadows for which all building polygons have uniformly bounded degree, provided the cycle rank is allowed to grow.
\end{conjecture}

\begin{problem}\label{prob:reduction}
Construct an explicit reduction proving Conjecture~\ref{conj:nphard}.  A natural strategy is to build variable gadgets whose two possible coorientations represent Boolean values and clause gadgets whose unavoidable conflicts measure violated clauses.  The holonomy equations suggest a reduction from a parity-constrained satisfiability problem, while the metric closing obstruction suggests a possible reduction from bend-minimization or orthogonal-drawing problems.
\end{problem}

The value of the present results is therefore twofold.  They provide exact polynomial or fixed-parameter algorithms in the tree-like and tree--necklace regimes, and they isolate the part of the general problem where computational hardness is most likely to enter.

\begin{remark}\label{rem:geometric-complexity}
The invariants considered here are geometric minimization invariants.  They are therefore rather different in nature from the algebraic invariants of generic immersed plane curves introduced by Arnold, such as $J^+$, $J^-$ and $St$; see \cite{ArnoldCaustics,ArnoldPlaneCurves}.  Arnold's invariants are computed from local changes under perestroikas and can be evaluated by explicit combinatorial formulae once the curve is given.  By contrast, the quantities $\mu([\Gamma])$ and $\parnum(\Gamma)$ require an optimization over all geometric realizations of a fixed shadow, together with local convexity, holonomy and possible closing constraints.  This is the reason why their computational complexity is expected to be substantially higher in unrestricted families.
\end{remark}

\section{The Gauss-map multiplicity and parallel tangents}

We now add a second invariant attached to the same class of immersed circles.  Let $c:S^1\to\R^2$ be an oriented immersion and let
\[
 T_c:S^1\to S^1,\qquad T_c(t)=\frac{c'(t)}{|c'(t)|}
\]
be its Gauss, or tangent, map.  Denote its degree by
\[
 \rot(c)=\deg T_c,
\]
the Whitney rotation number.  For a regular tangent direction $u\in S^1$ put
\[
 n_c(u)=\#T_c^{-1}(u),
\]
and define the oriented parallel-tangent number
\[
 \parnum(c)=\min_{u\in S^1\text{ regular}} n_c(u).
\]
For a fixed shadow $\Gamma$ with fixed Eulerian traversal, define
\[
 \parnum(\Gamma)=\min_{c\in[\Gamma]}\parnum(c).
\]
Thus $\parnum(\Gamma)$ is the smallest possible number of appearances of the least-covered tangent direction.

\begin{remark}
If one wants unoriented parallel lines rather than oriented tangent vectors, one should replace $T_c$ by its projectivization $S^1\to\mathbb RP^1$.  Since $\mathbb RP^1\simeq S^1$, this is the map $t\mapsto T_c(t)^2$ in angular coordinates.  The oriented version is slightly cleaner and will be used below.
\end{remark}

\begin{lemma}[Degree lower bound]\label{lem:paralleldegree}
For every oriented immersed circle,
\[
 \parnum(c)\ge |\rot(c)|.
\]
Moreover, for every regular $u\in S^1$ one has
\[
 n_c(u)\equiv \rot(c)\pmod 2.
\]
Consequently,
\[
 \parnum(\Gamma)\ge |\rot(\Gamma)|.
\]
\end{lemma}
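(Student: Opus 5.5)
The plan is to treat $T_c:S^1\to S^1$ as a smooth map of closed oriented one-manifolds and compute its degree at a regular value.

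For a regular value $u$, each preimage $t\in T_c^{-1}(u)$ has $T_c'(t)\neq 0$. Writing $T_c(t)=e^{i\theta(t)}$, we have $\theta'(t)=\kappa(t)|c'(t)|$, where $\kappa$ is the signed curvature. So the local degree at $t$ is $\operatorname{sign}\kappa(t)\in\{\pm1\}$. The standard degree formula then gives
\[
 \rot(c)=\deg T_c=\sum_{t\in T_c^{-1}(u)}\operatorname{sign}\kappa(t)=n_+(u)-n_-(u),
\]
while $n_c(u)=n_+(u)+n_-(u)$. Two consequences follow at once:
\[
 n_c(u)\ge |n_+(u)-n_-(u)|=|\rot(c)|,\qquad n_c(u)-\rot(c)=2n_-(u)\equiv 0 \pmod 2 .
\]
The preimage set is finite because $S^1$ is compact and $T_c$ is a local diffeomorphism near each preimage of a regular value.

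Next, minimizing over regular $u$ gives $\parnum(c)\ge|\rot(c)|$. Regular values exist and are in fact dense by Sard's theorem, so the minimum is over a nonempty set.

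For the shadow statement, I would note that $\rot$ is constant on $[\Gamma]$. Curves in $[\Gamma]$ are related by orientation-preserving diffeomorphisms of the plane, which are isotopic to the identity and hence induce regular homotopies. Whitney's theorem says $\rot$ is a regular homotopy invariant. Alternatively, $\rot$ can be read off combinatorially from the shadow and traversal via Whitney's formula. Either way, $\rot(\Gamma)$ is well defined, and minimizing $\parnum(c)\ge|\rot(\Gamma)|$ over $c\in[\Gamma]$ gives the last inequality.

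There is one point to be careful about. If $[\Gamma]$ were read as allowing orientation-reversing diffeomorphisms, $\rot$ would only be defined up to sign; the absolute value makes the bound insensitive to this. Otherwise the argument is routine.
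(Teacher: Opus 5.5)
Your proof is correct and follows essentially the same route as the paper: at a regular value the local degrees are $\pm1$ and sum to $\deg T_c=\rot(c)$, which gives both the bound and the parity. Your additional remark that $\rot$ is constant on $[\Gamma]$ (an orientation-preserving diffeomorphism is isotopic to the identity, and $\rot$ is invariant under regular homotopy) fills in a step the paper leaves implicit.
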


\begin{proof}
For a regular value $u$ of the Gauss map, the signed sum of local degrees over $T_c^{-1}(u)$ is $\deg T_c=\rot(c)$.  The unsigned number of preimages is therefore at least $|\rot(c)|$, and it has the same parity as $\rot(c)$.
\end{proof}

The link with inflections is immediate.  If $s$ is arclength and $\theta(s)$ is a local lift of the tangent angle, then
\[
 \frac{d\theta}{ds}=\kappa(s),
\]
where $\kappa$ is the signed curvature.  Hence an ordinary sign-changing inflection is exactly a fold point of the Gauss map: as the curve passes through the inflection, the tangent vector stops moving around $S^1$ in one direction and starts moving in the other direction.

\begin{proposition}[Inflection-free case]\label{prop:parallelconvex}
If $c$ is inflection-free, then
\[
 \parnum(c)=|\rot(c)|.
\]
Consequently, if a shadow $\Gamma$ is inflection-free realizable, then
\[
 \parnum(\Gamma)=|\rot(\Gamma)|.
\]
\end{proposition}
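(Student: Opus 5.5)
If $c$ has no inflection points, its signed curvature $\kappa$ never vanishes. Since $S^1$ is connected, $\kappa$ then has a constant sign, say $\kappa>0$ after reversing orientation if necessary; reversing orientation changes neither $n_c(u)$ up to relabelling $u\mapsto -u$ nor $|\rot(c)|$. From $d\theta/ds=\kappa>0$, the lifted tangent angle $\theta$ is strictly increasing on $[0,L]$, where $L$ is the length. It runs from $\theta(0)$ to $\theta(0)+2\pi\,\rot(c)$, so $\rot(c)\ge 1$.

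I would then count preimages directly. Because $\theta$ is strictly monotone, the Gauss map $T_c$ is a local diffeomorphism with every local degree equal to $+1$, and every $u\in S^1$ is a regular value. For $u=e^{i\alpha}$, the points of $T_c^{-1}(u)$ correspond bijectively to solutions $s\in[0,L)$ of $\theta(s)\equiv\alpha \pmod{2\pi}$. A strictly increasing function sweeping a half-open interval of length exactly $2\pi\,\rot(c)$ hits each residue class modulo $2\pi$ exactly $\rot(c)$ times. Hence $n_c(u)=\rot(c)=|\rot(c)|$ for every $u$, and $\parnum(c)=|\rot(c)|$.

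An alternative route uses Lemma~\ref{lem:paralleldegree}. There the signed count of preimages equals $\rot(c)$. With all local degrees of the same sign, the unsigned count equals the absolute value of the signed count, which gives the equality $n_c(u)=|\rot(c)|$.

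For the shadow statement, $\rot$ is constant on $[\Gamma]$: isotopies through immersions are regular homotopies, and the Whitney index is invariant under them. Lemma~\ref{lem:paralleldegree} gives $\parnum(\Gamma)\ge|\rot(\Gamma)|$. If $\Gamma$ is inflection-free realizable, pick an inflection-free $c\in[\Gamma]$; by the first part, $\parnum(\Gamma)\le\parnum(c)=|\rot(\Gamma)|$, and the equality follows.

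The only delicate point is the convention that inflections are sign changes of $\kappa$. If a nonsign-changing zero of curvature were allowed, $\theta$ would still be nondecreasing, but the Gauss map would have a nonregular point there. That affects only finitely many directions $u$, and $\parnum$ is defined as a minimum over regular $u$. So I would treat the generic case $\kappa\neq0$ first and remark that degenerate zeros are excluded by genericity.
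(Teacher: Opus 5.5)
Your proof is correct and follows the paper's argument. Constant sign of curvature makes the Gauss map a covering of degree $\rot(c)$ with all sheets oriented the same way; you make this explicit through the strictly monotone lift of the tangent angle sweeping a half-open interval of length $2\pi|\rot(c)|$, and your shadow step (invariance of $\rot$ on $[\Gamma]$ plus the lower bound of Lemma~\ref{lem:paralleldegree}) spells out what the paper leaves implicit in its ``consequently''.
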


\begin{proof}
If $c$ is inflection-free, the curvature has constant sign.  Thus the Gauss map is a covering map of degree $\rot(c)$, with all sheets oriented in the same direction.  Every regular direction is covered exactly $|\rot(c)|$ times.
\end{proof}

\begin{figure}[t]
\centering
\begin{tikzpicture}[scale=.92,>=Stealth]
\draw[->] (-.2,0)--(6.2,0) node[right] {$s$};
\draw[->] (0,-.25)--(0,3.45) node[above] {$\theta$};
\draw[very thick] plot[smooth] coordinates {(0.15,.35) (1.1,2.4) (1.75,2.85) (2.6,1.15) (3.4,.75) (4.45,2.1) (5.85,3.05)};
\draw[dashed] (0,1.05)--(6.05,1.05) node[right] {$\alpha_1$};
\draw[dashed] (0,2.25)--(6.05,2.25) node[right] {$\alpha_2$};
\fill (1.75,2.85) circle (1.7pt) node[above] {fold};
\fill (3.4,.75) circle (1.7pt) node[below] {fold};
\node[align=center] at (3,-.85) {A lift of the tangent angle.  Inflections are folds of $T_c$,\\and $\parnum(c)$ is the least horizontal intersection number modulo $2\pi$.};
\end{tikzpicture}
\caption{The parallel-tangent invariant is a minimum level multiplicity of the Gauss map.}
\label{fig:gaussfolds}
\end{figure}

For curves with inflections, the invariant $\parnum$ measures something different from the number of inflections.  The number of inflections counts the folds of the Gauss map.  The number $\parnum(c)$ counts how many sheets of the Gauss map necessarily lie over every tangent direction.  A pair of inflections whose Gauss image occupies only a proper arc of $S^1$ need not increase $\parnum(c)$, whereas a pair whose image winds over the whole circle increases the least multiplicity by two.

This gives the following useful reformulation.  Suppose $c$ is generic and let
\[
 s_1,\ldots,s_m
\]
be its inflection points.  On each interval between consecutive $s_i$ the tangent angle is monotone.  The Gauss map is therefore represented by a cyclic collection of oriented intervals on the universal cover of $S^1$.  For a direction $\alpha$,
\[
 n_c(e^{i\alpha})
\]
is the number of these intervals whose projection contains $\alpha$, counted with multiplicity.  Thus $\parnum(c)$ is the minimum covering depth of the circle by the monotonicity intervals of the tangent angle.

\begin{definition}\label{def:gaussload}
Call the integer $\parnum(\Gamma)$ the \emph{Gauss load} of the shadow $\Gamma$.
\end{definition}

\begin{corollary}\label{cor:gaussobstructs}
If
\[
 \parnum(\Gamma)>|\rot(\Gamma)|,
\]
then $\Gamma$ is not inflection-free realizable.  Equivalently,
\[
 \parnum(\Gamma)>|\rot(\Gamma)| \quad\Longrightarrow\quad \mu([\Gamma])>0.
\]
The converse need not hold: inflection folds may be confined to a proper set of tangent directions and hence may fail to increase the least Gauss multiplicity.
\end{corollary}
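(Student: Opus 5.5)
The implication is the contrapositive of Proposition~\ref{prop:parallelconvex}, so the proof will run through it directly. Assume $\mu([\Gamma])=0$. By definition of $\mu$ there is a curve $c\in[\Gamma]$ with no inflection points. Proposition~\ref{prop:parallelconvex} then gives $\parnum(c)=|\rot(c)|$.

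Next I will identify $\rot(c)$ with $\rot(\Gamma)$. The isotopy class $[\Gamma]$ fixes the shadow and the Eulerian traversal, and orientation-preserving diffeomorphisms of the plane through regular curves preserve the Whitney rotation number. This is regular-homotopy invariance of the degree of the Gauss map, as in \cite{Whitney}. So $\rot(c)=\rot(\Gamma)$ is well defined on the class, and
\[
\parnum(\Gamma)=\min_{c'\in[\Gamma]}\parnum(c')\le\parnum(c)=|\rot(\Gamma)|.
\]
Lemma~\ref{lem:paralleldegree} gives the reverse inequality $\parnum(\Gamma)\ge|\rot(\Gamma)|$, so $\parnum(\Gamma)=|\rot(\Gamma)|$. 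The contrapositive is exactly the claim: $\parnum(\Gamma)>|\rot(\Gamma)|$ forces $\mu([\Gamma])>0$, that is, $\Gamma$ is not inflection-free realizable.

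The only point that needs care is that $\rot$ is an invariant of the class $[\Gamma]$. I will note that it is also combinatorially determined by the shadow and its traversal, through the standard Whitney formula in terms of double-point signs. The remark about the converse is not part of the implication. To support it I would point to the preceding discussion, where a pair of inflections whose Gauss image covers only a proper arc of $S^1$ leaves the minimal covering depth unchanged. A concrete shadow with $\mu>0$ but $\parnum(\Gamma)=|\rot(\Gamma)|$ could be obtained from a tree-like example via Theorem~\ref{thm:formula}, but a complete proof of that sharpness would need an explicit construction, and I would only sketch it.
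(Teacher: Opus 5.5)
Your argument is correct and is essentially the paper's: the corollary is the contrapositive of Proposition~\ref{prop:parallelconvex}. Your check that $|\rot|$ is constant on $[\Gamma]$ makes explicit a point the paper leaves implicit. For the converse, which the paper also only motivates heuristically, the figure-eight already suffices. Since $\rot=0$, no inflection-free realization exists, while a lemniscate realization has tangent angle sweeping an arc of length $3\pi/2<2\pi$, so $\parnum(\Gamma)=0=|\rot(\Gamma)|$.
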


\begin{remark}
The invariant is also close in spirit to classical formulae involving double tangents.  A direction with many preimages under the projectivized Gauss map produces many pairs of parallel tangencies, and the Fabricius--Bjerre theory relates double tangents, crossings and inflections of a generic closed plane curve; see \cite{FabriciusBjerre,Thompson}.  The present invariant is cruder but directly adapted to minimization over a fixed shadow.
\end{remark}

\begin{problem}[Gauss-load minimization]\label{prob:gaussload}
Develop a finite model for $\parnum(\Gamma)$ analogous to the coorientation model for $\mu([\Gamma])$.  More concretely, augment an admissible coorientation by tangent-angle intervals on the monotone arcs between normalized inflections and minimize the least covering depth of $S^1$ subject to the local polygonal, holonomy and metric closing constraints.
\end{problem}

\begin{conjecture}[Tree--necklace Gauss load]\label{conj:necklacegauss}
For tree-like and tree--necklace shadows, the minimum Gauss load is obtained by a reduced tangent-angle realization of a coorientation minimizing the conflict number, after deleting all redundant full turns of monotonicity intervals.  In particular, in these classes the only way to have $\parnum(\Gamma)>|\rot(\Gamma)|$ should be that every reduced realization has a collection of Gauss folds whose images cover the whole circle.
\end{conjecture}

\section{What remains open beyond the tree--necklace class}

The preceding arguments isolate three distinct obstructions.

\begin{enumerate}[label=(\roman*)]
\item \emph{Local polygonal obstruction.}  This is inflection admissibility condition.  It is present already for tree-like curves.
\item \emph{Cyclic holonomy obstruction.}  This appears when the block-adjacency graph has cycles.  For annular necklace cycles it is exactly the parity condition of Proposition~\ref{prop:holonomyobstruction}.
\item \emph{Gauss-load obstruction.}  Even after the number of folds is known, one still has to understand how their Gauss images cover the tangent circle.  This controls the least possible number of parallel tangencies.
\end{enumerate}

For a fully general embedded shadow there may be a further metric obstruction: even after local admissibility and trivial $\mathbb Z_2$ holonomy, there can be closing conditions involving tangent directions, relative positions and affine parameters of several pieces.  This is why the unrestricted problem is harder than the tree-like one.

\begin{problem}\label{prob:globalconditions}
Find intrinsic global compatibility conditions which, together with local admissibility and cyclic holonomy, characterize inflection-free realizability for arbitrary embedded plane shadows.
\end{problem}

\begin{conjecture}[Cycle-controlled shadows]\label{conj:cyclecontrolled}
For any embedded shadow whose block-adjacency graph is a cactus and whose cyclic blocks are annular in the sense of Definition~\ref{def:necklace}, the formula of Theorem~\ref{thm:necklaceformula} remains valid even when the attached tree-like components have arbitrary nesting, provided that Shapiro's local admissibility conditions are interpreted with respect to the actual nesting of the building polygons.
\end{conjecture}

\begin{problem}\label{prob:gap}
For a general embedded shadow $\Gamma$, estimate the possible gap
\[
 \mu([\Gamma])-\mu_{\loc}(\Gamma).
\]
Is this gap bounded above by a function of $b_1(G_B(\Gamma))$ alone, for example by $2b_1(G_B(\Gamma))$?
\end{problem}

\begin{problem}\label{prob:gauss}
Reformulate the local coorientation, holonomy and Gauss-load obstructions directly in terms of the Gauss diagram of the immersed curve.  Such a formulation would make the connection with Arnold's invariants more transparent and might provide computable lower bounds without first constructing the full planar building-polygon decomposition.
\end{problem}

\section{Concluding remarks}

The tree-like part of the inflection minimization problem has a clean finite form: admissible coorientations are the variables and conflicts are the cost.  This gives both the criterion for inflection-free realizability and an effective algorithm for the minimal normalized number of inflections.

The extension proposed here keeps the useful part of that theory and clarifies what must be added outside the tree-like class.  For arbitrary embedded shadows, the same coorientation model gives a universal lower bound.  For tree--necklace shadows, one obtains an exact formula by adding cyclic holonomy, equivalently a finite system of mod--$2$ equations on necklace cycles.  The algorithmic discussion suggests that exact minimization is tractable for tree-like and bounded-cycle-rank necklace shadows, but probably hard for unrestricted shadows.

The Gauss-map multiplicity gives a complementary invariant.  The inflection number counts the folds of the tangent map, while the Gauss load counts how many sheets of this map must remain over every tangent direction.  Hence the two minimization problems are related but not equivalent.  Understanding their joint finite model may be the most promising next step toward a genuinely global theory of immersed plane-circle shadows.

\end{document}